\documentclass{amsart}

\usepackage{tikz-cd}
\usepackage[hyphens]{url}

\usepackage{amsthm,amsmath,amssymb,amsfonts,dsfont}
\usepackage[colorlinks, linkcolor=blue, citecolor=blue]{hyperref}
\usepackage[indentafter]{titlesec}
\titleformat{name=\section}{}{\thetitle.}{0.8em}{\centering\scshape}
\usepackage{mathrsfs}
\usepackage{bm}
\usepackage{titlesec}
\numberwithin{equation}{section}
\titleformat{\subsection}[runin]
  {\normalfont\bfseries}{\thesubsection}{1em}{}

\newtheorem{definition}{\textbf{Definition}}[section]
\newtheorem{theorem}[definition]{\textbf{Theorem}}
\newtheorem{corollary}[definition]{\textbf{Corollary}}
\newtheorem{lemma}[definition]{\textbf{Lemma}}

\newtheorem{example}[definition]{\textbf{Example}}
\newtheorem{remark}[definition]{\textbf{Remark}}
\newtheorem{letterthm}{Theorem}

\newtheorem{lettercor}[letterthm]{Corollary}

\def\wx{\infty}

\def\d{\mathrm{d}}

\def\xia{\limits}

\def\car{\curvearrowright}
\def\supp{\mathrm{supp \, }}

\def\prob{\mathrm{Prob}}
\def\BL{B(L^2(M))}

\def\CS{\mathcal{S}}
\def\CB{\mathcal{B}}
\def\CP{\mathcal{P}}
\def\CA{\mathcal{A}}

\def\id {\mathrm{id}}
\def\lag {\langle}
\def\rag {\rangle}

\def\R {\mathbb{R}}
\def\C {\mathbb{C}}
\def\UCP {\mathrm{UCP}}

\def\RN{\mathrm{RN}}

\title[Rigidity of Furstenberg entropy under ucp maps]{Rigidity of Furstenberg entropy under ucp maps}
\author{Shuoxing Zhou}
\address{\'Ecole Normale Sup\'erieure\\ D\'epartement de math\'ematiques et applications\\ 45 rue d'Ulm\\ 75230 Paris Cedex 05\\ FRANCE}
\email{shuoxing.zhou@ens.psl.eu}
\urladdr{\href{https://sites.google.com/view/sxzhou}{\textcolor{black}{https://sites.google.com/view/sxzhou}}}

\begin{document}

\begin{abstract}
Given a tracial von Neumann algebra $(M,\tau)$, we prove that a state preserving $M$-bimodular ucp map between two stationary W$^*$-extensions of $(M,\tau)$ preserves the Furstenberg entropy if and only if it induces an isomorphism between the Radon-Nikodym factors. With a similar proof, we extend this result to quasi-factor maps between stationary spaces of locally compact groups and prove an entropy separation between unique stationary and amenable spaces. As applications, we use these results to establish rigidity phenomena for unique stationary Poisson boundaries.
\end{abstract}
 \maketitle

\section{Introduction}

Let $G$ be a locally compact second countable group and $(X,\nu_X)$ be a nonsingular $G$-space. Following \cite{KV83} and \cite{NZ00}, the Radon-Nikodym factor $(X_\RN,\nu_{X_\RN})$ of $(X,\nu_X)$ is defined to be the minimal $G$-factor that preserves the information of Radon-Nikodym cocycles $\{\frac{\d g\nu_X}{\d\nu_X}\}_{g\in G}$. That is, the completed $\sigma$-algebra $\overline{\Sigma}_{X_\RN}$ of $(X_\RN,\nu_{X_\RN})$ is the minimal $\sigma$-subalgebra of $\overline{\Sigma}_X$ that keeps $\{\frac{\d g\nu_X}{\d\nu_X}\}_{g\in G}$ measurable. 

Let $(M,\tau)$ be a separable tracial von Neumann algebra. As the noncommutative analogue of nonsingular action $G\car(X,\nu_X)$, a W$^*$-inclusion $(M,\tau)\subset(\CA,\varphi_\CA)$ is a von Neumann algebra $\CA$ with $M\subset\CA$ and a normal faithful state $\varphi_\CA$ on $\CA$ with $\varphi_\CA|_M=\tau$. The study of noncommutative ergodic theory in the setting of W$^*$-inclusions was initiated by Das-Peterson \cite{DP20}. Recently, \cite{Zh23} extended the notion of Radon-Nikodym factors to the noncommutative setting (see Subsection \ref{RN factor def} for details).

Both the classical and noncommutative Radon-Nikodym factors exhibit strong connections to the Furstenberg entropy (Subsection \ref{NC entropy}). 
Let $\mu\in\prob(G)$ be an admissible measure and $\varphi\in\CS_\tau(\BL)$ be a normal regular strongly generating hyperstate (Subsection \ref{hyperstate} and \ref{NCPB}). Firstly, the Radon-Nikodym factor always preserves the Furstenberg entropy. Furthermore, for a factor map $\pi:(Y,\nu_Y)\to(X,\nu_X)$ between $(G,\mu)$-spaces or an inclusion map $\id_\CA:(\CA,\varphi_\CA)\hookrightarrow(\CB,\varphi_\CB)$ between 
$\varphi$-stationary W$^*$-extensions, the map preserves the Furstenberg entropy if and only if the two sides of the map share the same Radon-Nikodym factor (\cite[Proposition 1.9]{NZ00} and \cite[Theorem 3.6]{Zh23}). In this paper, we generalize this result to state preserving $M$-bimodular ucp maps $\CP:(\CA,\varphi_\CA)\to(\CB,\varphi_\CB)$ and quasi-factor maps, i.e., $G$-equivariant measurable maps $\beta:Y\to\prob(X)$ with $\nu_X=\int_Y\beta_y\d\nu_Y(y)$, following the terminology from \cite{Gla03}. The main results are as follows:

\begin{letterthm}[Theorem \ref{Thm NC}]\label{Thm A}
Let $(\CA,\varphi_\CA)$ and $(\CB,\varphi_\CB)$ be $\varphi$-stationary W$^*$-extensions of $(M,\tau)$. Assume that there exists a state preserving $M$-bimodular ucp map $\CP:(\CA,\varphi_\CA)\to (\CB,\varphi_\CB)$. Then 
$$h_\varphi(\CA,\varphi_\CA)\leq h_\varphi(\CB,\varphi_\CB).$$
Assume that $h_\varphi(\CB,\varphi_\CB)<+\wx$. Then the equality holds if and only if $\CP|_{\CA_\mathrm{RN}}:\CA_\mathrm{RN}\to\CB_\mathrm{RN}$ is a $\ast$-isomorphism between the Radon-Nikodym factors of $(\CA,\varphi_\CA)$ and $(\CB,\varphi_\CB)$.
\end{letterthm}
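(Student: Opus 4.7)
The overall strategy is to reduce to the inclusion case of \cite[Theorem 3.6]{Zh23} by combining monotonicity and rigidity of Araki relative entropy under state-preserving normal ucp maps. Following \cite{Zh23}, the Furstenberg entropy $h_\varphi(\CA,\varphi_\CA)$ admits an expression as an Araki relative entropy built from $\varphi_\CA$ and the Markov-type ucp operator $P_\varphi^\CA:\CA\to\CA$ associated to the hyperstate $\varphi$ and the left/right $M$-actions on $\CA$ (the $\varphi$-stationarity of $\varphi_\CA$ being the condition $\varphi_\CA\circ P_\varphi^\CA=\varphi_\CA$). Because $\varphi$ lives on $B(L^2 M)$ and $\CP$ is $M$-bimodular and state preserving, $\CP$ intertwines the Markov operators: $\CP\circ P_\varphi^\CA=P_\varphi^\CB\circ\CP$. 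Uhlmann-Petz monotonicity of Araki relative entropy under normal state-preserving ucp maps then yields the inequality $h_\varphi(\CA,\varphi_\CA)\leq h_\varphi(\CB,\varphi_\CB)$.

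The easy direction of the equality statement is immediate: if $\CP|_{\CA_\RN}:\CA_\RN\to\CB_\RN$ is a $\ast$-isomorphism, then \cite[Theorem 3.6]{Zh23} applied to the entropy-preserving inclusions $\CA_\RN\hookrightarrow\CA$ and $\CB_\RN\hookrightarrow\CB$ gives $h_\varphi(\CA,\varphi_\CA)=h_\varphi(\CA_\RN)=h_\varphi(\CB_\RN)=h_\varphi(\CB,\varphi_\CB)$. For the hard direction, assume $h_\varphi(\CA,\varphi_\CA)=h_\varphi(\CB,\varphi_\CB)<+\wx$. Equality in the monotonicity step from the previous paragraph, combined with Petz's rigidity theorem for Araki relative entropy, should produce a normal ucp recovery map $\CR:\CB\to\CA$ reversing $\CP$ on the states appearing in the entropy formula. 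Since $P_\varphi^\CA$ is designed precisely to detect the Radon-Nikodym cocycles of $(\CA,\varphi_\CA)$, the effective support is $\CA_\RN$, forcing $\CP|_{\CA_\RN}$ to be an injective normal $\ast$-homomorphism into $\CB$. Denoting its image by $\CN\subset\CB$, the inclusion $\CN\hookrightarrow\CB$ is entropy preserving, so a second application of \cite[Theorem 3.6]{Zh23} gives $\CB_\RN\subset\CN$; the reverse containment $\CN\subset\CB_\RN$ follows because state-preserving $M$-bimodular $\ast$-embeddings send Radon-Nikodym cocycles to Radon-Nikodym cocycles.

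The main obstacle is executing the Petz recovery step cleanly: one needs to pick the correct states whose relative entropy coincides with $h_\varphi$ up to a known multiplicative or additive constant, and then verify that the multiplicative subalgebra produced by the recovery map is exactly $\CA_\RN$ and not something larger or smaller. The finiteness hypothesis $h_\varphi(\CB,\varphi_\CB)<+\wx$ is presumably needed precisely so that Petz's rigidity theorem applies. A secondary, routine concern is to keep all constructions $M$-bimodular and compatible with the W$^*$-stationary framework throughout, and to ensure normality of the recovery map.
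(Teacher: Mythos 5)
Your proposal is inspired by the right circle of ideas (Petz-type results, multiplicative domains) but follows a route through Araki relative entropy and Petz recovery that is materially different from the paper's, and, more importantly, the decisive steps are left unexecuted. The paper's proof of Theorem \ref{Thm NC} does not pass through Araki relative entropy or Petz's recovery/rigidity theorem at all. Instead it extends Petz's elementary operator-monotonicity inequality \cite[Proposition 1]{Pet86} to functions of the form $x\mapsto(x^t+s)^r$ (Lemma \ref{(D r+s)t}), builds the $L^2$-contraction $P$ from $\CP$, derives $P^*\Delta_\CB P\leq\Delta_\CA$ directly from the intertwining $PS_\CA=S_\CB P$, passes to $e_M^\CB\log\Delta_\CB e_M^\CB\leq e_M^\CA\log\Delta_\CA e_M^\CA$ by differentiation at $t=0$ and a limit in $\epsilon$, and reads off the entropy inequality. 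For the equality case, a two-parameter limit-and-differentiation argument upgrades the forced equalities to $\Delta_\CA^{is}z_k\xi_\CA=P^*\Delta_\CB^{is}z_k\xi_\CB$ for all $s\in\R$ and all $k$, hence $\CP(\sigma_s^\CA(z))=\sigma_s^\CB(z)$ for all $z\in M$, so that $\{\sigma_s^\CA(z)\}\subset\mathrm{mult}(\CP)$ and thus $\CA_\RN\subset\mathrm{mult}(\CP)$; only at the very end is \cite[Theorem 3.6]{Zh23} invoked to force $\CP(\CA_\RN)=\CB_\RN$.

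Your sketch replaces all of this with ``Petz's rigidity theorem \ldots should produce a normal ucp recovery map'' and ``the effective support is $\CA_\RN$,'' which you yourself flag as the main obstacle. The gaps are concrete. First, you never express $h_\varphi(\CA,\varphi_\CA)$ as an Araki relative entropy, or a sum of such, in a form to which Uhlmann monotonicity and Petz rigidity apply; in the general W$^*$-inclusion setting the $z_k$ in the standard form of $\varphi$ are only contractions, so the vector functionals $\langle\,\cdot\,z_k\xi_\CA,z_k\xi_\CA\rangle$ are not states, and the translation to a relative-entropy identity is far from automatic. Second, even granting such an identity, Petz's sufficiency theorem produces a recovery map whose fixed-point algebra is generated by Connes cocycle derivatives; identifying that algebra with $\CA_\RN=\langle\sigma_s^\CA(M)\rangle_{s\in\R}$ is precisely the hard content of the paper's limit/differentiation argument and cannot simply be asserted. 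Third, the ``Markov operator $P_\varphi^\CA:\CA\to\CA$'' you propose to intertwine with $\CP$ is not a standard object of the Das--Peterson hyperstate framework (the convolution machinery lives on $\BL$ via $\CP_{\varphi_\CA}$), so its definition and the claimed intertwining would also need to be established. The easy direction of the equivalence and the final ``onto $\CB_\RN$'' step via a second application of \cite[Theorem 3.6]{Zh23} are correct and agree with the paper, but the core of the equality case is missing.
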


Following an operator algebraic proof in the same spirit as that of Theorem \ref{Thm A}, we have the following main result for locally compact groups. This is an unusual example where a general theorem (Theorem \ref{Thm A}) in the noncommutative ergodic theory of W$^*$-inclusions has preceded and inspired its counterpart (Theorem \ref{Thm B}) in ergodic group theory, while the usual case is the converse.

\begin{letterthm}[Theorem \ref{P for G} and \ref{beta for G}]\label{Thm B}
Let $(X,\nu_X)$ and $(Y,\nu_Y)$ be compact metrizable $(G,\mu)$-spaces. Assume that there exists a quasi-factor map, i.e., a $G$-equivariant measurable map $\beta:Y\to \prob(X)$ with $\nu_X=\int_Y\beta_y\d\nu_Y(y)$. Then we have
$$h_\mu(X,\nu_X)\leq h_\mu(Y,\nu_Y).$$
Assume that $h_\mu(Y,\nu_Y)<+\wx$. Then the equality holds if and only if $\beta$ induces a measure preserving $G$-equivariant isomorphism $\beta_\mathrm{RN}:(Y_\mathrm{RN},\nu_{Y_\mathrm{RN}})\to(X_\mathrm{RN},\nu_{X_\mathrm{RN}})$ between the Radon-Nikodym factors with $\pi_{X_{\mathrm{RN}}*}\circ\beta(y)=\delta_{\beta_\mathrm{RN}\circ\pi_{Y_\mathrm{RN}}(y)}$ for $\nu_Y$-a.e. $y\in Y$, i.e., the following commutative diagram:
$$\begin{tikzcd}
Y \arrow[d, "\pi_{Y_\mathrm{RN}}", two heads] \arrow[r, "\beta"] & \prob(X) \arrow[d, "\pi_{X_\mathrm{RN}*}", two heads] \\
Y_\mathrm{RN} \arrow[r, "\beta_\mathrm{RN}"] \arrow[r, "\sim"']  & X_\mathrm{RN}\subset \prob(X_\mathrm{RN}).            
\end{tikzcd}$$
\end{letterthm}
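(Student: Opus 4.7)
The plan is to mirror the operator-algebraic proof of Theorem~\ref{Thm A} in the locally compact group setting, after encoding the quasi-factor map $\beta:Y\to\prob(X)$ as a normal ucp map
\[\CP:L^\infty(X,\nu_X)\to L^\infty(Y,\nu_Y),\qquad \CP(f)(y):=\int_X f\,\d\beta_y.\]
The $G$-equivariance of $\beta$ gives $G$-equivariance of $\CP$, and the quasi-factor identity $\nu_X=\int\beta_y\,\d\nu_Y(y)$ is precisely the state-preserving condition $\nu_Y\circ\CP=\nu_X$. Thus $\beta$ realises a state-preserving $G$-equivariant ucp map between the $\mu$-stationary systems $(L^\infty(X),\nu_X)$ and $(L^\infty(Y),\nu_Y)$, i.e.\ the scalar ($M=\C$) analogue of the setting of Theorem~\ref{Thm A}.

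For the inequality, I would express the Furstenberg entropy as an integrated Kullback--Leibler divergence between $\nu_X$ and its $g$-translates and invoke the data-processing inequality. Viewing $\beta$ as a Markov kernel $Y\to X$, we have $\beta_*\nu_Y=\nu_X$ and, by $G$-equivariance of $\beta$, $\beta_*(g\nu_Y)=g\nu_X$. The standard chain-rule proof of DPI applied to the coupling $\d\lambda(y,x):=\d\nu_Y(y)\d\beta_y(x)$ on $Y\times X$ versus $\d\lambda^g(y,x):=\d g\nu_Y(y)\d\beta_y(x)$ gives
\[D(\nu_X\,\|\,g\nu_X)\leq D(\nu_Y\,\|\,g\nu_Y)\qquad\text{for every }g\in G,\]
and integrating against $\mu$ yields $h_\mu(X,\nu_X)\leq h_\mu(Y,\nu_Y)$. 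Equivalently, this step is Jensen's inequality for the ucp map $\CP$ applied to $t\mapsto t\log t$ and the Radon--Nikodym derivative $\psi_g^X=\frac{\d g\nu_X}{\d\nu_X}\in L^1(X)$.

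For the equality case, the chain-rule decomposition
\[D(\nu_Y\,\|\,g\nu_Y)-D(\nu_X\,\|\,g\nu_X)=\int_X D\bigl(\lambda_{Y|x}\,\|\,\lambda^g_{Y|x}\bigr)\,\d\nu_X(x),\]
valid under the finite-entropy hypothesis on $Y$, forces $\lambda_{Y|x}=\lambda^g_{Y|x}$ for $\nu_X$-a.e.\ $x$ and $\mu$-a.e.\ $g$. Combined with the identity $\d\lambda^g/\d\lambda(y,x)=\psi_g^Y(y)$ and the fact that $\lambda,\lambda^g$ share the $x|y$-conditional $\beta_y$, this yields $\psi_g^X(x)=\psi_g^Y(y)$ for $\lambda$-a.e.\ $(y,x)$; i.e.\ $\psi_g^X$ is $\beta_y$-a.s.\ equal to the constant $\psi_g^Y(y)$ for $\mu$-a.e.\ $g$ and $\nu_Y$-a.e.\ $y$. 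Choosing a countable subset $S\subset G$ of full $\mu$-measure whose Radon--Nikodym cocycles generate the $\sigma$-algebra of $X_\RN$, I obtain a measurable $G$-equivariant map $\alpha:Y\to X_\RN$ with $\pi_{X_\RN *}\beta_y=\delta_{\alpha(y)}$ for $\nu_Y$-a.e.\ $y$ and $\alpha_*\nu_Y=\nu_{X_\RN}$. Since $\alpha$ is determined by the Radon--Nikodym cocycles of $Y$ itself, it factors through $Y_\RN$ as $\alpha=\beta_\RN\circ\pi_{Y_\RN}$, and $\beta_\RN:(Y_\RN,\nu_{Y_\RN})\to(X_\RN,\nu_{X_\RN})$ is a $G$-equivariant measure-preserving factor map. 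Because the Radon--Nikodym factor preserves the Furstenberg entropy, $h_\mu(Y_\RN)=h_\mu(Y)=h_\mu(X)=h_\mu(X_\RN)$, so the classical rigidity \cite[Proposition 1.9]{NZ00} for entropy-preserving factor maps between Radon--Nikodym factors makes $\beta_\RN$ an isomorphism, yielding the stated commutative diagram.

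The hard part is the equality case: converting the $\lambda$-a.e.\ identity $\psi_g^X(x)=\psi_g^Y(y)$ into the pointwise structural statement that $\pi_{X_\RN *}\beta_y$ is Dirac, uniformly in a countable family of cocycles generating $X_\RN$, requires a separability and measurable-selection argument. Once $\alpha$ is in hand and shown to descend to $Y_\RN$, the remaining identification of $\beta_\RN$ as an isomorphism is a direct invocation of the existing factor-map rigidity result.
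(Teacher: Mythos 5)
Your proposal takes a genuinely different route for the inequality. The paper works on the Hilbert spaces $L^2(G\times X)$ and $L^2(G\times Y)$: it defines anti-linear involutions $S_X,S_Y$ so that the Radon--Nikodym cocycles become ``modular operators'' $\Delta_X=S_X^*S_X$, $\Delta_Y=S_Y^*S_Y$, derives $P^*\Delta_Y^tP\leq\Delta_X^t$ from Petz's inequality (Lemma~\ref{(D r+s)t}), and lets $t\to0$. You instead apply the data-processing inequality for relative entropy to the Markov kernel $\beta$, which is elementary and bypasses the operator-theoretic machinery. For the equality case, your chain-rule decomposition $D(\nu_Y\,\|\,g\nu_Y)-D(\nu_X\,\|\,g\nu_X)=\int_X D(\lambda_{Y|x}\,\|\,\lambda^g_{Y|x})\,\d\nu_X(x)$ is a clean substitute for the paper's repeated differentiation in the auxiliary parameters $r,s,t$ culminating in $\CP(\Delta_X(g,\cdot)^{it})=\Delta_Y(g,\cdot)^{it}$, and your resulting a.e.\ identity $\psi^X_g(x)=\psi^Y_g(y)$ is equivalent to the paper's conclusion that each $\psi_g^X$ lies in the multiplicative domain of $\CP$ with image $\psi_g^Y$. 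The final invocation of \cite[Proposition~1.9]{NZ00} matches the paper's appeal to \cite[Lemma~1.13]{NZ00} and \cite[Theorem~3.6]{Zh23}.

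There is, however, a gap you flag but do not close, and it is where the real work lies. You obtain $\psi^X_g(x)=\psi^Y_g(y)$ only for $\mu$-a.e.\ $g$, and then want a countable $S\subset G$ of ``good'' $g$'s whose cocycles generate $\sigma(X_\RN)$. But a $\mu$-conull set of $g$'s need not have cocycles that generate $\sigma(X_\RN)$, and need not even be dense in $G$, unless $\supp\mu=G$; and a countable set cannot literally have full $\mu$-measure. The paper resolves this at the very start of the proof of Theorem~\ref{P for G} by replacing $\mu$ with $\bar\mu=\sum_{l\geq0}2^{-l-1}\mu^{*(k_0+l)}$, which rescales all stationary entropies by the same positive constant (so the equality hypothesis is preserved) and satisfies $\supp\bar\mu=G$ and $\bar\mu\prec m_G$; it then invokes the continuity-in-$g$ argument from the proof of \cite[Proposition~1.14]{NZ00} to propagate the a.e.\ identity from a $\mu$-conull (hence dense) set to all of $G$. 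Without this normalization and continuity step, the a.e.\ identity does not produce the Dirac-ness of $\pi_{X_\RN*}\beta_y$, the map $\alpha:Y\to X_\RN$ is not defined, and the commutative diagram is out of reach.
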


As a corollary of Theorem \ref{Thm B}, we prove that there is an entropy separation between unique stationary $(G,\mu)$-spaces and amenable $(G,\mu)$-spaces, as well as the noncommutative analogue (Corollary \ref{us leq am for M}). We refer to Definition \ref{unique stationary} for the definition of the unique stationary property and Definition \ref{def RN irr} for RN-irreducibility.
\begin{lettercor}[Corollary \ref{us leq am for G}]\label{Cor C}
Let $(X,\nu_X)$ be a $\mu$-unique stationary $(G,\mu)$-space and $(Y,\nu_Y)$ be an amenable $(G,\mu)$-space. Then we have
$$h_\mu(X,\nu_X)\leq h_\mu(Y,\nu_Y).$$
In particular, let
$$h_\mathrm{us}(\mu)=\sup\{h_\mu(X,\nu_X)\mid\mbox{$(X,\nu_X)$ is a $\mu$-unique stationary $(G,\mu)$-space}\},$$
$$h_\mathrm{am}(\mu)=\inf\{h_\mu(Y,\nu_Y)\mid\mbox{$(Y,\nu_Y)$ is an amenable $(G,\mu)$-space}\}.$$
Then we have
$$h_\mathrm{us}(\mu)\leq h_\mathrm{am}(\mu).$$
Moreover, assume that $h_\mu(B,\nu_B)<+\wx$, then up to isomorphisms, $G$ admits at most one RN-irreducible $(G,\mu)$-space that is $\mu$-unique stationary and amenable.
\end{lettercor}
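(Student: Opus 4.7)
The plan is to manufacture a quasi-factor map $\beta:Y\to\prob(X)$ and then apply Theorem \ref{Thm B}. Amenability of the $(G,\mu)$-space $(Y,\nu_Y)$ (in Zimmer's sense) means that for every compact metrizable convex $G$-space $Q$ there exists a $G$-equivariant measurable map $Y\to Q$; applying this to $Q=\prob(X)$ with the pushforward $G$-action yields a $G$-equivariant measurable $\beta:Y\to\prob(X)$. A short calculation using $\mu\ast\nu_Y=\nu_Y$ and $G$-equivariance of $\beta$ shows that the pushforward $\beta_*\nu_Y$ is $\mu$-stationary on $\prob(X)$,
\[\mu\ast\beta_*\nu_Y=\beta_*(\mu\ast\nu_Y)=\beta_*\nu_Y,\]
so by $G$-equivariance of the barycenter map, the barycenter $\eta:=\int_Y\beta_y\,\d\nu_Y(y)\in\prob(X)$ is a $\mu$-stationary probability measure on $X$. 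Unique stationarity of $(X,\nu_X)$ then forces $\eta=\nu_X$, so $\beta$ is a genuine quasi-factor map and Theorem \ref{Thm B} delivers $h_\mu(X,\nu_X)\leq h_\mu(Y,\nu_Y)$. Passing to supremum over $X$ and infimum over $Y$ immediately gives $h_{\mathrm{us}}(\mu)\leq h_{\mathrm{am}}(\mu)$.

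For the uniqueness clause, the assumption $h_\mu(B,\nu_B)<+\wx$ (with $B$ the $\mu$-Poisson boundary, which is maximal for Furstenberg entropy among $(G,\mu)$-spaces) makes all relevant entropies finite, so the equality case of Theorem \ref{Thm B} is available. Given two $\mu$-unique stationary, amenable, RN-irreducible $(G,\mu)$-spaces $(X_1,\nu_1)$ and $(X_2,\nu_2)$, I would run the construction above symmetrically---once using amenability of $X_2$ and unique stationarity of $X_1$, once with the roles swapped---to produce quasi-factor maps $X_2\to\prob(X_1)$ and $X_1\to\prob(X_2)$; the two resulting entropy inequalities force $h_\mu(X_1,\nu_1)=h_\mu(X_2,\nu_2)$. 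Applying the equality case of Theorem \ref{Thm B} to, say, the first quasi-factor map then provides a $G$-equivariant measure preserving isomorphism $(X_2)_{\mathrm{RN}}\cong(X_1)_{\mathrm{RN}}$; RN-irreducibility of each $X_i$ asserts $(X_i)_{\mathrm{RN}}=X_i$, and thus $X_1\cong X_2$.

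The one point I expect to require care is the first step: formally extracting from the amenability of $(Y,\nu_Y)$ as a $(G,\mu)$-space the $G$-equivariant measurable section into $\prob(X)$ for an arbitrary compact metrizable $(G,\mu)$-space $X$, and checking that the barycenter really lives inside $\prob(X)$ in the right measurable sense so that Theorem \ref{Thm B} applies verbatim. Once this amenability input is pinned down, the remainder is the one-line stationarity check above together with two invocations of Theorem \ref{Thm B}.
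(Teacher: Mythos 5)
Your proposal is correct and takes essentially the same route as the paper: the paper's proof of Corollary \ref{us leq am for G} produces a $G$-equivariant ucp map $\CP:C(X)\to L^\infty(Y,\nu_Y)$, uses unique stationarity to force $\nu_Y\circ\CP=\nu_X$, extends $\CP$ to $L^\infty(X)\to L^\infty(Y)$, and applies Theorem \ref{P for G}; this is exactly the ucp-language version of your quasi-factor-map argument via Theorem \ref{beta for G}, and the uniqueness clause is handled by the same symmetric application of the equality case. One minor remark: the property you ascribe to Zimmer amenability (a $G$-equivariant measurable section $Y\to\prob(X)$ for each compact metrizable $G$-space $X$) is precisely what the paper calls \emph{weak} amenability (Definition \ref{weak amenability}); amenability implies it, so your argument is fine, and in fact the in-text Corollary \ref{us leq am for G} is stated under this weaker hypothesis.
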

As an application of Corollary \ref{Cor C}, we give new proofs to some of the main results in \cite{HK24} (Corollary \ref{HK24 Thm A} and \ref{HK24 Thm B}), which study amenable intermediate factors and subalgebras under the assumption of unique stationary Poisson boundaries.

By applying Theorem \ref{Thm B}, we also provide new proofs to some classical results like \cite[Theorem 4.4]{FG10} and \cite[Theorem 9.2]{NS13} (Corollary \ref{max entropy for G} and  \ref{USB 3 eq for G}), which characterize spaces with maximal entropy. Moreover, by applying Theorem \ref{Thm A}, we are able to generalize these results to the noncommutative setting:

Let $(\CB_\varphi,\zeta)$ be the $\varphi$-Poisson boundary (Subsection \ref{NCPB}). \begin{lettercor}[Corollary \ref{max entropy for M}]
Assume that $h_\varphi(\CB_\varphi,\zeta)<+\wx$. Let $(\CA,\varphi_\CA)$ be a $\varphi$-stationary W$^*$-extension of $(M,\tau)$. Then $h_\varphi(\CA,\varphi_A)=h_\varphi(\CB_\varphi,\zeta)$ if and only if $(\CB_\varphi,\zeta)\subset(\CA,\varphi_\CA)$ as the Radon-Nikodym factor.
\end{lettercor}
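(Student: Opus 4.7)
The plan is to derive this corollary from Theorem \ref{Thm A}. The reverse implication is immediate: if $(\CB_\varphi, \zeta)$ sits inside $(\CA, \varphi_\CA)$ as the Radon-Nikodym factor $\CA_\RN$, then since passage to the Radon-Nikodym factor preserves Furstenberg entropy (as recalled in the introduction),
$$h_\varphi(\CA, \varphi_\CA) \;=\; h_\varphi(\CA_\RN) \;=\; h_\varphi(\CB_\varphi, \zeta).$$

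For the forward implication, I would invoke the universal property of the noncommutative Poisson boundary from Das--Peterson: every $\varphi$-stationary W$^*$-extension $(\CA, \varphi_\CA)$ of $(M, \tau)$ admits a state preserving $M$-bimodular ucp map $\CP : (\CA, \varphi_\CA) \to (\CB_\varphi, \zeta)$. This is the noncommutative counterpart of the canonical quasi-factor map from the Poisson boundary to any stationary $(G,\mu)$-space, and its direction is exactly the one fed into Theorem \ref{Thm A}. Applying Theorem \ref{Thm A} to $\CP$ yields $h_\varphi(\CA, \varphi_\CA) \leq h_\varphi(\CB_\varphi, \zeta)$, and under the finite entropy hypothesis, equality forces $\CP|_{\CA_\RN} : \CA_\RN \to (\CB_\varphi)_\RN$ to be a $*$-isomorphism.

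To upgrade this $*$-isomorphism to the sought inclusion $(\CB_\varphi, \zeta) \subset (\CA, \varphi_\CA)$ with $\CA_\RN = \CB_\varphi$, the remaining input is the Radon-Nikodym irreducibility of the Poisson boundary, namely $(\CB_\varphi)_\RN = \CB_\varphi$. This is the noncommutative analog of the classical fact that every $\mu$-boundary coincides with its own Radon-Nikodym factor, and I expect it to follow from the harmonic-element construction of $\CB_\varphi$ in \cite{DP20} and \cite{Zh23}, whose $M$-bimodule structure is generated by Radon-Nikodym-type data. Granting this, inverting $\CP|_{\CA_\RN}$ realizes $(\CB_\varphi, \zeta)$ $M$-bimodularly inside $(\CA, \varphi_\CA)$ with image $\CA_\RN$. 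The main obstacle is therefore verifying this Radon-Nikodym irreducibility at the required level of generality; once granted, the corollary follows cleanly from Theorem \ref{Thm A} applied to the universal ucp map into $\CB_\varphi$.
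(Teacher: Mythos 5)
Your proposal is correct and follows essentially the same route as the paper: the reverse direction is entropy invariance of passing to the Radon--Nikodym factor, and the forward direction applies Theorem \ref{Thm A} to the canonical noncommutative Furstenberg boundary map $\Phi_{\varphi_\CA}\in\UCP_M(\CA,\CB_\varphi)$ with $\varphi_\CA=\zeta\circ\Phi_{\varphi_\CA}$ (the paper cites \cite[Theorem 3.2]{Zh23b} for its existence, rather than \cite{DP20} directly, but it is the same object). The one point you leave under-justified is the RN-irreducibility $(\CB_\varphi)_\RN=\CB_\varphi$: your proposed reason (``the $M$-bimodule structure is generated by Radon--Nikodym-type data'') is not the argument the paper has in mind and is hard to make precise. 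The clean justification, recorded in the paper's example following Definition \ref{def RN irr}, is the uniqueness of noncommutative Furstenberg boundary maps \cite[Theorem 3.2(1)]{Zh23b}, which forces every $\varphi$-boundary (intermediate subalgebra of $M\subset\CB_\varphi$) to coincide with its own RN factor; alternatively one can invoke the entropy-gap fact (item (3) of Subsection \ref{NC entropy}) that a proper $\varphi$-boundary has strictly smaller entropy, combined with entropy invariance of the RN factor. With that substitution, your proof is complete and matches the paper's.
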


We say that a nonsingular space or W$^*$-extension is RN-irreducible if its Radon-Nikodym factor is identified with itself (Definition \ref{def RN irr}). 
\begin{lettercor}[Corollary \ref{USB 3 eq for M}]
Assume that $(\CB_\varphi,\zeta)$ is $\varphi$-unique stationary and $h_\varphi(\CB_\varphi,\zeta)<+\wx$. Assume that $(\CA,\varphi_\CA)$ is a RN-irreducible $\varphi$-stationary W$^*$-extension of $(M,\tau)$. Then the following conditions are equivalent: 
\begin{itemize}
    \item [(i)] $h_\varphi(\CA,\varphi_\CA)=h_\varphi(\CB_\varphi,\zeta)$;
    \item [(ii)] $(\CA,\varphi_\CA)\cong(\CB_\varphi,\zeta)$ as W$^*$-extensions;
    \item[(iii)] $\CA$ is amenable. 
\end{itemize}
In particular, such an inclusion $M\subset \CB_\varphi$ does not admit any non-trivial amenable intermediate subalgebra.
\end{lettercor}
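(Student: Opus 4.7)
The plan is to establish the cycle (i) $\Rightarrow$ (ii) $\Rightarrow$ (iii) $\Rightarrow$ (i). For (i) $\Rightarrow$ (ii), I feed the hypothesis into the preceding Corollary \ref{max entropy for M}: the equality $h_\varphi(\CA,\varphi_\CA) = h_\varphi(\CB_\varphi,\zeta)$ places $(\CB_\varphi,\zeta)$ inside $(\CA,\varphi_\CA)$ as its Radon--Nikodym factor, i.e., $\CA_\RN \cong \CB_\varphi$. Since $\CA$ is RN-irreducible by hypothesis, $\CA_\RN = \CA$, so $(\CA,\varphi_\CA) \cong (\CB_\varphi,\zeta)$.

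For (ii) $\Rightarrow$ (iii), I invoke the standard fact that the noncommutative Poisson boundary $\CB_\varphi$ is an injective, hence amenable, von Neumann algebra, citing the Izumi / Das--Peterson framework recalled in Subsection \ref{NCPB}. For (iii) $\Rightarrow$ (i), I sandwich $h_\varphi(\CA,\varphi_\CA)$ between two copies of $h_\varphi(\CB_\varphi,\zeta)$: the maximal entropy property of the Poisson boundary (a byproduct of Corollary \ref{max entropy for M}) gives $h_\varphi(\CA,\varphi_\CA) \leq h_\varphi(\CB_\varphi,\zeta)$, while Corollary \ref{us leq am for M}, applied with $(\CB_\varphi,\zeta)$ unique stationary and $(\CA,\varphi_\CA)$ amenable and $\varphi$-stationary, yields the reverse inequality.

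For the ``in particular'' clause, let $M \subset \mathcal{C} \subset \CB_\varphi$ be any amenable intermediate subalgebra. Since the noncommutative Markov operator acts as the identity on $\CB_\varphi$ and therefore on $\mathcal{C}$, the pair $(\mathcal{C}, \zeta|_\mathcal{C})$ is a $\varphi$-stationary W$^*$-extension of $(M,\tau)$. Corollary \ref{us leq am for M} applied to $\CB_\varphi$ and $\mathcal{C}$ gives $h_\varphi(\CB_\varphi,\zeta) \leq h_\varphi(\mathcal{C},\zeta|_\mathcal{C})$, while Theorem \ref{Thm A} applied to the identity inclusion $\mathcal{C} \hookrightarrow \CB_\varphi$ gives the reverse. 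The equality case of Theorem \ref{Thm A} then produces a $\ast$-isomorphism $\mathcal{C}_\RN \cong (\CB_\varphi)_\RN$; since $\CB_\varphi$ is itself RN-irreducible (apply Corollary \ref{max entropy for M} with $\CA = \CB_\varphi$), $(\CB_\varphi)_\RN = \CB_\varphi$, so $\mathcal{C}_\RN = \CB_\varphi$ as subalgebras of $\CB_\varphi$, which forces $\mathcal{C} = \CB_\varphi$.

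The main obstacle is organizational rather than conceptual: each step reduces to Theorem \ref{Thm A}, Corollary \ref{max entropy for M}, or Corollary \ref{us leq am for M}, and the only genuine checks are that intermediate subalgebras of $\CB_\varphi$ inherit the structure of a $\varphi$-stationary W$^*$-extension with the restricted state, and that the identity inclusion is a state preserving $M$-bimodular ucp map so that Theorem \ref{Thm A} genuinely applies --- both routine within the conventions of Subsections \ref{NCPB} and \ref{hyperstate}.
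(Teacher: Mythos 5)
Your proof is correct and follows essentially the same route as the paper's: (i)$\Rightarrow$(ii) via Corollary \ref{max entropy for M} plus RN-irreducibility, (ii)$\Rightarrow$(iii) from amenability of noncommutative Poisson boundaries, and (iii)$\Rightarrow$(i) by sandwiching between the maximal-entropy inequality and Corollary \ref{us leq am for M}. The only cosmetic difference is that the paper's full statement inserts ``weakly amenable'' as a fourth condition and factors (iii)$\Rightarrow$(i) through it, which you fold into one step. Your treatment of the ``in particular'' clause --- applying Theorem \ref{Thm A} to the inclusion $\mathcal{C}\hookrightarrow\CB_\varphi$ and using the equality case together with RN-irreducibility of $\CB_\varphi$ to force $\mathcal{C}_\RN=\CB_\varphi$ and hence $\mathcal{C}=\CB_\varphi$ --- is slightly more explicit than what the paper records (the paper leaves it implicit, and it could alternatively be deduced from the fact that intermediate subalgebras of $\CB_\varphi$ are $\varphi$-boundaries and hence RN-irreducible, so the main equivalence applies directly, combined with the known rigidity that a $\varphi$-boundary with full entropy equals $\CB_\varphi$); both derivations are valid and of comparable length.
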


\section{Preliminaries}\label{Preliminaries}

\subsection{Hyperstates and ucp maps.}\label{hyperstate}
Fix a separable tracial von Neumann algebra $(M, \tau )$. Following \cite{DP20}, for a C$^*$-algebra $\CA$ with $M\subset\CA$ and a state $\varphi_\CA$ on $\CA$, we say that $\varphi_\CA$ is a $\tau$-\textbf{hyperstate} if ${\varphi_\CA}|_M =\tau$. We denote by $\mathcal{S}_\tau(\CA)$ the set of $\tau$-hyperstates on $\CA$. Let $e_M\in B(L^2(\CA,\varphi_\CA))$ be the orthogonal projection onto $L^2(M)$. The ucp map $\CP_{\varphi_\CA}:\CA\to \BL$ is defined as 
$$\CP_{\varphi_\CA}(T)=e_MTe_M, \ T\in \CA.$$
According to \cite[Proposition 2.1]{DP20}, ${\varphi_\CA}\mapsto\CP_{\varphi_\CA}$ is a bijection between (normal) hyperstates on $\CA$ and (normal) $M$-bimodular ucp maps from $\CA$ to $\BL$, whose inverse is $\CP \mapsto \langle\CP(\,\cdot\,)\hat{1},\hat{1}\rangle$. 

For ${\varphi_\CA}\in\mathcal{S}_\tau(\CA)$ and $\varphi \in \mathcal{S}_\tau(B(L^2(M)))$, the \textbf{convolution} $\varphi \ast{\varphi_\CA}\in \mathcal{S}_\tau(\CA)$ is defined to be the hyperstate associated to the $M$-bimodular ucp map $\mathcal{P}_{\varphi}\circ \mathcal{P}_{\varphi_\CA}$. And ${\varphi_\CA}$ is said to be \textbf{$\varphi$-stationary} if $\varphi \ast{\varphi_\CA}=\varphi_\CA$.

For convenience, by a \textbf{W$^*$-inclusion} $(M,\tau)\subset(\CA,\varphi_\CA)$ or a \textbf{W$^*$-extension} $(\CA,\varphi_\CA)$ of $(M,\tau)$, we mean that $M\subset\CA$ is a W$^*$-inclusion and $\varphi_\CA\in\CS_\tau(\CA)$ is a normal faithful hyperstate.

\subsection{Noncommutative Poisson boundaries.}\label{NCPB}
  Let $\varphi\in\mathcal{S}_\tau(B(L^2(M)))$ be a hyperstate. Following \cite{DP20}, the set of $\mathcal{P}_\varphi$\textbf{-harmonic operators} is defined to be
    $$\mathrm{Har}(\mathcal{P}_\varphi)=\mathrm{Har}(B(L^2(M)),\mathcal{P}_\varphi)=\{ T\in B(L^2(M))\mid \mathcal{P}_\varphi(T)=T\}. $$
    The \textbf{noncommutative Poisson boundary} $\mathcal{B}_\varphi$ of $M$ with respect to $\varphi$ is defined to be the noncommutative Poisson boundary of the ucp map $\mathcal{P}_\varphi$ as
defined by Izumi \cite{Izu02}, that is, the Poisson boundary $\mathcal{B}_\varphi$ is the unique $C^*$-algebra (a von Neumann algebra when $\varphi$ is normal) that is isomorphic, as an operator system, to the space of harmonic operators $\mathrm{Har}(\mathcal{P}_\varphi)$. And the isomorphism $\CP:\CB_\varphi\to \mathrm{Har}(\mathcal{P}_\varphi)$ is called the $\varphi$-\textbf{Poisson transform}. Since $M\subset\mathrm{Har}(\mathcal{P}_\varphi)$, $M$ can also be embedded into $\CB_\varphi$ as a subalgebra. 

According to \cite[Proposition 2.8]{DP20}, for a normal hyperstate $\varphi\in\mathcal{S}_\tau(B(L^2(M)))$, there exists a sequence $\{z_n\}\subset M$ such that $\sum_{n=1}^{\wx}z_n^*z_n=1$, and $\varphi$ and $\CP_\varphi$ admit the following standard form:
$$\varphi(T)=\sum_{n=1}^{\wx}\langle T  \hat{z}_n,\hat{z}_n
     \rangle, \ \mathcal{P}_\varphi(T)=\sum_{n=1}^{\wx} (Jz_nJ)T(Jz_n^*J), \ T\in \BL.$$
Following \cite{DP20}, $\varphi$ is said to be 
   \begin{itemize}
        \item \textbf{regular}, if $\sum_{n=1}^{\wx}\xia z_n^*z_n=\sum_{n=1}^{\wx}\xia z_nz_n^*=1$;
        \item \textbf{strongly generating}, if the unital algebra (rather than the unital $\ast$-algebra) generated by $\{z_n\}$ is weakly dense in $M$.
    \end{itemize}
According to \cite{DP20}, when $\varphi$ is a normal regular strongly generating hyperstate, $\zeta:=\varphi\circ\CP\in\mathcal{S}_\tau(\CB_\varphi)$ is a normal faithful hyperstate on $\CB_\varphi$.

\subsection{Entropy.}\label{NC entropy} Following \cite{DP20}, for a normal regular strongly generating hyperstate $\varphi\in\mathcal{S}_\tau(B(L^2(M)))$, let $A_\varphi\in \BL$ be the trace class operator associated with $\varphi$. The \textbf{entropy} of $\varphi$ is defined to be 
$$H(\varphi)=-\mathrm{Tr}(A_\varphi \log A_\varphi).$$
The \textbf{asymptotic entropy} of $\varphi$ is defined to be 
$$h(\varphi)=\lim_{n\to\wx}\frac{H(\varphi^{*n})}{n}.$$

For a W$^*$-inclusion $(M,\tau)\subset(\CA,\varphi_\CA)$, let $\Delta_\CA:L^2(\CA,\varphi_\CA)\to L^2(\CA,\varphi_\CA)$ be the modular operator of $(\CA,\varphi_\CA)$ and $e\in B(L^2(\CA,\varphi_\CA))$ be the orthogonal projection onto $L^2(M)$. The \textbf{Furstenberg entropy} of $(\CA,\varphi_\CA)$ with respect to $\varphi$ is defined to be 
$$h_\varphi(\CA,\varphi_\CA)=-\varphi(e\log\Delta_\CA e).$$

It is shown in \cite{DP20} and \cite{Zh23} the following fundamental theorems regarding noncommutative entropy: Assume that $H(\varphi)<+\wx$, then we have
\begin{itemize}
    \item[(1)] $h_\varphi(\CA,\varphi_\CA)\leq h(\varphi)$ for any $\varphi$-stationary W$^*$-extension $(\CA,\varphi_\CA)$;
    \item[(2)] $h_\varphi(\CB_\varphi,\zeta)=h(\varphi)$;
    \item[(3)] For any $\varphi$-boundary, i.e., intermediate subalgebra $(M,\tau)\subset(\CB_0,\zeta_0)\subset(\CB_\varphi,\zeta)$, one has $h_\varphi(\CB_0,\zeta_0)=h_\varphi(\CB_\varphi,\zeta)$ iff $(\CB_0,\zeta_0)=(\CB_\varphi,\zeta)$;
    \item[(4)] The $\varphi$-Poisson boundary is trivial (i.e., $\CB_\varphi=M$) iff $h(\varphi)=0$.
\end{itemize}

\subsection{Radon-Nikodym factors.}\label{RN factor def}
The notion of Radon-Nikodym factors is firstly introduced for discrete groups in \cite{KV83} and then extended to locally compact groups in \cite{NZ00}. Let $G$ be a locally compact second countable group and $(X,\Sigma_X,\nu_X)$ be a nonsingular $G$-space, where $\Sigma_X$ is the the $\sigma$-algebra of $(X,\nu_X)$ and $\overline{\Sigma}_X$ is the completion with respect to $\nu_X$. Following \cite[Definition 1.12]{NZ00}, the Radon-Nikodym factor $(X_\RN,\Sigma_{X_\RN},\nu_{X_\RN})$ is the unique $G$-factor of $(X,\Sigma_X,\nu_X)$ such that $\overline{\Sigma}_{X_\RN}\subset\overline{\Sigma}_X$ is the minimal $\sigma$-subalgebra that keeps the function family $\{\frac{\d g\nu_X}{\d\nu_X}\}_{g\in G}$ measurable, or equivalently, $L^\wx(X_\RN)\subset L^\wx(X)$ is the von Neumann subalgebra generated by $\{(\frac{\d g\nu_X}{\d\nu_X})^{it}\}_{g\in G,t\in\R}$. And we denote the factor map by $\pi_{X_\RN}:(X,\nu_X)\to(X_\RN,\nu_{X_\RN})$.

Recently, \cite{Zh23} extended the notion of Radon-Nikodym factors to the setting of W$^*$-inclusions. For a W$^*$-inclusion $(M,\tau)\subset(\CA,\varphi_\CA)$, let $\{\sigma_t^\CA\}_{t\in\R}$ be the automorphism group of $(\CA,\varphi_\CA)$. Then following \cite[Definition 3.4]{Zh23}, the \textbf{noncommutative Radon-Nikodym factor} $(\CA,\varphi_\CA)_\RN=(\CA_\RN,\varphi_{\CA_\RN})$ of $(\CA,\varphi_\CA)$ is the von Neumann subalgebra $\CA_\RN\subset\CA$ generated by $\{\sigma_t^\CA(z)\mid z\in M,t\in \R\}$ with the hyperstate $\varphi_{\CA_\RN}=\varphi_\CA|_{\CA_\RN}$.

It is shown in \cite[Lemma 1.13]{NZ00} and \cite[Theorem 3.6]{Zh23} that both classical and noncommutative Radon-Nikodym factors preserve the Furstenberg entropy.

Assume that $G$ is discrete. It is shown in \cite[Example 3.5]{Zh23} that the noncommutative Radon-Nikodym factor of $L(G\car (X,\nu_X))$ with respect to $L(G)$ is exactly $L(G\car(X_\mathrm{RN},\nu_{X_\mathrm{RN}}))$.

\section{Proofs of main results}
The following lemma generalizes \cite[Proposition 1]{Pet86}.
\begin{lemma}\label{(D r+s)t}
For $i=1,2$, let $\Delta_i$ be a unbounded positive operator on Hilbert space $H_i$. If a bounded operator $T:H_1\to H_2$ with $\Vert T\Vert=1$ satisfies 
$$T^*\Delta_2 T\leq\Delta_1,$$
then for any $0\leq r,t\leq 1$ and $s\geq 0$, we have
$$T^*(\Delta_2^t+s)^r T\leq (\Delta_1^t+s)^r. $$
\end{lemma}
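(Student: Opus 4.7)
My plan is to realize $\phi(x)=(x^t+s)^r$ as an operator monotone function on $[0,\infty)$ for $0\leq r,t\leq 1$, $s\geq 0$, and then combine operator monotonicity with the Hansen-Pedersen (operator Jensen) inequality. Indeed, $\phi$ is a composition of the operator monotone maps $x\mapsto x^t$ and $x\mapsto x^r$ (both by L\"owner-Heinz on $[0,\infty)$) with the shift $x\mapsto x+s$, so $\phi$ itself is operator monotone on $[0,\infty)$. Moreover $\phi(0)=s^r\geq 0$, so by Löwner-Hansen $\phi$ is additionally operator concave.

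First I would handle the case where $\Delta_1$ and $\Delta_2$ are bounded. Since $\|T\|\leq 1$, the Hansen-Pedersen inequality applied to the operator concave function $\phi$ (with $\phi(0)\geq 0$) gives $T^*\phi(\Delta_2)T\leq \phi(T^*\Delta_2 T)$. Operator monotonicity of $\phi$ applied to the hypothesis $T^*\Delta_2 T\leq \Delta_1$ gives $\phi(T^*\Delta_2 T)\leq \phi(\Delta_1)$. Chaining these two inequalities yields $T^*\phi(\Delta_2)T\leq \phi(\Delta_1)$, as desired.

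For general unbounded $\Delta_i$, I would extend by approximation via the integral representation
$$y^r=\frac{\sin(\pi r)}{\pi}\int_0^\infty\frac{y}{y+\lambda}\lambda^{r-1}\,\d\lambda,\qquad 0<r<1,$$
applied to $y=\Delta_i^t+s$, together with the analogous representation for $x^t$. These express $(\Delta_i^t+s)^r$ as an integral of bounded operators $(\Delta_i^t+s)(\Delta_i^t+s+\lambda)^{-1}$, and further reduce $\Delta_i^t$ to an integral of bounded resolvents $\Delta_i(\Delta_i+\mu)^{-1}$. Each of these is an operator monotone function of $\Delta_i$ vanishing at $0$, so the bounded-case argument applies pointwise in the parameters $(\lambda,\mu)$; integrating then recovers the full inequality. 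The endpoints $r\in\{0,1\}$ or $t\in\{0,1\}$ are either trivial or follow from Petz's original proposition (the case $r=1$).

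The chief obstacle is that when $rt=1$ the operators $(\Delta_i^t+s)^r$ are themselves unbounded, so the target inequality must be read in the quadratic-form sense, and justifying the passage to the limit under the integrals requires the standard monotone convergence of functional calculi together with care about the domains of $\Delta_i^{1/2}$ and $T$.
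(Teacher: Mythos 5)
Your proof is correct and takes a genuinely different route. The paper bootstraps entirely from Petz's Proposition 1, which is essentially the special case $\phi(x)=x^r$ of the operator Jensen inequality you invoke: it first applies Petz to the hypothesis $T^*\Delta_2 T\leq\Delta_1$ to get $T^*\Delta_2^t T \leq \Delta_1^t$, then shifts by $s$ using $T^*T\leq 1$ to obtain $T^*(\Delta_2^t+s)T\leq\Delta_1^t+s$, and finally applies Petz a second time with $\Delta_i$ replaced by the positive self-adjoint operators $\Delta_i^t+s$. This two-step sandwich needs no general operator-monotone/operator-concave machinery and inherits the unbounded-operator treatment directly from Petz's statement, which is already formulated in that generality. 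Your approach instead realizes $\phi(x)=(x^t+s)^r$ as a single operator monotone and operator concave function and applies Hansen-Pedersen together with operator monotonicity once; this is conceptually cleaner and self-contained, but it obliges you to verify operator concavity and, more importantly, to extend Hansen-Pedersen to unbounded positive operators, which you correctly flag as the delicate point and which the paper's argument sidesteps by delegating all unboundedness issues to Petz. Minor correction: $(\Delta_i^t+s)^r$ is unbounded whenever $rt>0$ and $\Delta_i$ is unbounded, not only when $rt=1$, but this does not affect the argument since all inequalities are read in the quadratic-form sense.
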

\begin{proof}
By \cite[Proposition 1]{Pet86}, for any $0\leq r\leq1$, we have 
\begin{equation}\label{P Dt P Dt}
T^*\Delta_2^r T\leq \Delta_1^r
\end{equation}
Since $T^*T\leq 1$, for any $0\leq t\leq1$, we have 
$$T^*(\Delta_2^t+s)T \leq T^*\Delta_2^t T+s\cdot 1\leq \Delta_1^t+s.$$
By replacing $\Delta_i$ with $\Delta_i^t+s$ $(i=1,2)$ in (\ref{P Dt P Dt}), we have
$$T^*(\Delta_2^t+s)^r T\leq (\Delta_1^t+s)^r.$$
\end{proof}

We fix a separable tracial von Neumann algebra $(M,\tau)$ and a normal regular strongly generating hyperstate $\varphi=\sum_{k=1}^{\wx}\lag\,\cdot\, \hat{z}_k,\hat{z}_k\rag\in \CS_\tau(\BL)$. 
\begin{theorem}\label{Thm NC}
Let $(\CA,\varphi_\CA)$ and $(\CB,\varphi_\CB)$ be $\varphi$-stationary W$^*$-extensions of $(M,\tau)$. Assume that there exists a state preserving $M$-bimodular ucp map $\CP:(\CA,\varphi_\CA)\to (\CB,\varphi_\CB)$. Then 
$$h_\varphi(\CA,\varphi_\CA)\leq h_\varphi(\CB,\varphi_\CB).$$
Assume that $h_\varphi(\CB,\varphi_\CB)<+\wx$. Then the equality holds if and only if $\CP|_{\CA_\mathrm{RN}}:\CA_\mathrm{RN}\to\CB_\mathrm{RN}$ is a $\ast$-isomorphism between the Radon-Nikodym factors of $(\CA,\varphi_\CA)$ and $(\CB,\varphi_\CB)$.
\end{theorem}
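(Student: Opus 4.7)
The plan is to convert the ucp map $\CP$ into a Hilbert-space contraction, establish a modular-operator inequality, and push it through Lemma \ref{(D r+s)t} together with the integral representation of $\log$. First I would define $V:L^2(\CA,\varphi_\CA)\to L^2(\CB,\varphi_\CB)$ by $V\hat a := \widehat{\CP(a)}$; the Kadison--Schwarz inequality for $\CP$ together with state preservation gives
$$\|V\hat a\|_{\varphi_\CB}^2 = \varphi_\CB(\CP(a)^*\CP(a)) \leq \varphi_\CB(\CP(a^*a)) = \|\hat a\|_{\varphi_\CA}^2,$$
so $V$ is a contraction. Since $\CP$ is $M$-bimodular with $\CP(1)=1$, it fixes $M$ pointwise, so $V|_{L^2(M)}$ is the canonical isometric embedding and, in particular, $V\hat z_k = \hat z_k$ and $V^*\hat z_k = \hat z_k$. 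Applying the Kadison--Schwarz inequality to $a^*$ and using $\|\Delta^{1/2}\hat b\|^2 = \varphi(bb^*)$ from Tomita--Takesaki,
$$\langle V^*\Delta_\CB V\hat a,\hat a\rangle = \varphi_\CB(\CP(a)\CP(a)^*) \leq \varphi_\CB(\CP(aa^*)) = \langle\Delta_\CA\hat a,\hat a\rangle,$$
i.e.\ the quadratic-form inequality $V^*\Delta_\CB V \leq \Delta_\CA$. Lemma \ref{(D r+s)t} with $t=1$, $s=0$ then yields $V^*\Delta_\CB^r V \leq \Delta_\CA^r$ for every $r\in[0,1]$.

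Next I would feed this into $\log x = \lim_{r\to 0^+}(x^r-1)/r$ applied to the vectors $\hat z_k$: using $V\hat z_k = \hat z_k$ and $\|\hat z_k\|_{\varphi_\CA} = \|\hat z_k\|_{\varphi_\CB}$,
$$\langle\log\Delta_\CA\hat z_k,\hat z_k\rangle_\CA - \langle\log\Delta_\CB\hat z_k,\hat z_k\rangle_\CB = \lim_{r\to 0^+}\frac{1}{r}\langle(\Delta_\CA^r - V^*\Delta_\CB^r V)\hat z_k,\hat z_k\rangle \geq 0.$$
Summing over $k$ and using $\varphi(e\log\Delta e) = \sum_k\langle\log\Delta\hat z_k,\hat z_k\rangle$ (since $e\hat z_k = \hat z_k$) gives $h_\varphi(\CA,\varphi_\CA) \leq h_\varphi(\CB,\varphi_\CB)$. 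The converse direction of the equality characterization is immediate: if $\CP|_{\CA_\RN}$ is a $*$-isomorphism onto $\CB_\RN$, the W$^*$-extensions $(\CA_\RN,\varphi_{\CA_\RN})$ and $(\CB_\RN,\varphi_{\CB_\RN})$ are isomorphic and hence share the Furstenberg entropy; combined with the RN-factor's entropy-preservation \cite[Theorem 3.6]{Zh23}, this gives $h_\varphi(\CA) = h_\varphi(\CB)$.

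The main obstacle is the forward direction of the equality case under $h_\varphi(\CB)<+\infty$. Equality together with the non-negativity of each summand in the preceding display forces, for every $k$, $\lim_{r\to 0^+}\frac{1}{r}p_k(r) = 0$, where
$$p_k(r) := \langle(\Delta_\CA^r - V^*\Delta_\CB^r V)\hat z_k,\hat z_k\rangle = \int\lambda^r\,d\mu_k - \int\lambda^r\,d\nu_k \geq 0 \text{ on } [0,1],$$
with $\mu_k,\nu_k$ the spectral measures of $\Delta_\CA,\Delta_\CB$ at $\hat z_k$, and with $p_k(0)=p_k(1)=0$ automatic from $\varphi|_M=\tau$ (matching zeroth and first moments). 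The key analytic step is to exploit the full family of inequalities $\int(\lambda+s)^r\,d\mu_k \geq \int(\lambda+s)^r\,d\nu_k$ ($s\geq 0$, $r\in[0,1]$) from Lemma \ref{(D r+s)t}, together with the derivative-at-zero equality, to deduce via a rigidity property of Laplace transforms of the signed measure $\mu_k-\nu_k$ that $\mu_k = \nu_k$. By the spectral theorem this upgrades to $V\Delta_\CA^{it}\hat z_k = \Delta_\CB^{it}\hat z_k$ for all $t\in\R$, i.e.\ $\CP(\sigma_t^\CA(z_k)) = \sigma_t^\CB(z_k)$. Because $\CA_\RN$ and $\CB_\RN$ are respectively generated by $\{\sigma_t^\CA(z_k)\}_{t,k}$ and $\{\sigma_t^\CB(z_k)\}_{t,k}$, this identifies $V|_{L^2(\CA_\RN)}$ with an isometry onto $L^2(\CB_\RN)$; hence Kadison--Schwarz is saturated on $\CA_\RN$, $\CP|_{\CA_\RN}$ is a $*$-homomorphism, and faithfulness of $\varphi_\CA$ gives injectivity, yielding the claimed $*$-isomorphism $\CA_\RN \cong \CB_\RN$.
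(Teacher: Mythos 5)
Your overall strategy matches the paper's: the contraction $V$ (the paper's $P$) on the $L^2$ level, the modular inequality $V^*\Delta_\CB V\leq\Delta_\CA$, Lemma~\ref{(D r+s)t}, the difference quotient $\log x=\lim_{r\to 0^+}(x^r-1)/r$ for the entropy inequality, and the easy converse of the equality case are essentially what the paper does. The gap is exactly where you flag it. You hope to deduce $\mu_k=\nu_k$ from the forward family $\int(\lambda+s)^r\,\d\mu_k\geq\int(\lambda+s)^r\,\d\nu_k$ together with matched zeroth, first and log moments ``via a rigidity property of Laplace transforms,'' but no such rigidity is stated and I do not see how to obtain it: those hypotheses only say $g_s(r):=\int(\lambda+s)^r\,\d(\mu_k-\nu_k)$ is nonnegative on $r\in[0,1]$ with $g_s(0)=g_s(1)=0$ and $g_0'(0^+)=0$, which does not force $g_s\equiv 0$. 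The paper supplies the missing direction by a \emph{dual} modular inequality: from $PS_\CA=S_\CB P$, equivalently $S_\CA^*P^*=P^*S_\CB^*$, it derives $\Delta_\CB^{-1}\geq P\Delta_\CA^{-1}P^*$; feeding this into Lemma~\ref{(D r+s)t} with inverse powers, passing to $\log$, and then differentiating in $s$ at $s=0$ yields the reverse inequality $\langle\Delta_\CA^t z_k\xi_\CA,z_k\xi_\CA\rangle\leq\langle\Delta_\CB^t z_k\xi_\CB,z_k\xi_\CB\rangle$, which combined with the forward Petz inequality forces equality and hence the vector identity $\Delta_\CA^t z_k\xi_\CA=P^*\Delta_\CB^t z_k\xi_\CB$ for $0<t<1/2$. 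Your forward-only argument has no access to this reverse inequality, and without it the ``rigidity'' step is unproven.

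Two smaller points. To pass from $\CP(\sigma_t^\CA(z_k))=\sigma_t^\CB(z_k)$ to $\CA_\RN\subset\mathrm{mult}(\CP)$ one must check multiplicativity at $\sigma_t^\CA(z_k)$, which requires $\CP(\sigma_t^\CA(z_k^*z_k))=\sigma_t^\CB(z_k^*z_k)$, i.e.\ the identity for arbitrary $z\in M$ and not only for the $z_k$'s; strong generation does not give that $\mathrm{span}\{z_k\hat{1}\}$ is dense in $L^2(M)$, and the paper replaces $\varphi$ by $\bar\varphi=\sum_{k}2^{-k-1}\varphi^{*k}$ at the very start (entropy rescales by a constant) precisely to secure this density before running the argument. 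Finally, once $\CP|_{\CA_\RN}$ is known to be a $*$-isomorphism onto $\CP(\CA_\RN)$, identifying $\CP(\CA_\RN)$ with $\CB_\RN$ needs a further entropy argument via \cite[Theorem 3.6]{Zh23}; asserting that ``$V|_{L^2(\CA_\RN)}$ is an isometry onto $L^2(\CB_\RN)$'' assumes the surjectivity you are trying to prove.
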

\begin{proof}
Let $\bar{\varphi}=\sum_{k=0}^{\wx}2^{-k-1}\varphi^{*k}$. Then by \cite[Corollary 5.11]{DP20}, for any $\varphi$-stationary W$^*$-extension $(\mathcal{C},\varphi_\mathcal{C})$ of $(M,\tau)$, we have
$$h_{\bar{\varphi}}(\mathcal{C},\varphi_\mathcal{C})=\sum_{k=0}^{\wx}2^{-k-1}k\cdot h_\varphi(\mathcal{C},\varphi_\mathcal{C}):= C\cdot h_\varphi(\mathcal{C},\varphi_\mathcal{C}).$$ 
Hence after replacing $\varphi$ with $\bar{\varphi}$, we may assume that $\mathrm{span}\{z_k\hat{1}\}$ is dense in $L^2(M)$. 

Let $P:L^2(\CA,\varphi_\CA)\to L^2(\CB,\varphi_\CB)$ be the bounded operator extended from $P(a\xi_\CA)=\CP(a)\xi_\CB$ ($a\in \CA$), where $\xi_A$ and $\xi_\CB$ are the cyclic vectors. Then we have $\Vert P \Vert=1$, $P(z\xi_\CA)=z\xi_\CB$ and $P^*(z\xi_\CB)=z\xi_\CA$ for any $z\in M$. 

Let $S_\CA:a\xi_\CA\mapsto a^*\xi_\CA$ and $S_\CB:b\xi_\CB\mapsto b^*\xi_\CB$ for $a\in\CA$ and $b\in\CB$. Then we have $PS_\CA=S_\CB P$. Hence the modular operators $\Delta_\CA$ and $\Delta_\CB$ satisfy that 
$$\Delta_\CA=S_\CA^*S_\CA\geq S_\CA^*P^*PS_\CA=P^*S_\CB^*S_\CB P=P^*\Delta_\CB P.$$
By Lemma \ref{(D r+s)t}, for $0\leq t\leq 1$ and $\epsilon>0$, we have 
\begin{equation}\label{PDBP leq DA}
P^*(\Delta_\CB+\epsilon)^t P\leq (\Delta_\CA+\epsilon)^t.
\end{equation}

For $\Delta=\Delta_\CA$ or $\Delta_\CB$, since $\varphi_\CA|_M=\varphi_\CB|_M=\tau$, we know that $\Delta^{1/2}|_{L^2(M)}$ is isometric. Hence for any $z\in M$, we have $z\xi\in D(\Delta^{1/2})\subset D((\Delta+\epsilon)^{t/2})$ ($\xi=\xi_\CA$ or $\xi_\CB$). By (\ref{PDBP leq DA}), we have
$$\lag (\Delta_\CB+\epsilon)^t z\xi_\CB,z\xi_\CB\rag=\lag P^*\Delta_\CB^t P z\xi_\CA,z\xi_\CA\rag\leq \lag (\Delta_\CA+\epsilon)^t  z\xi_\CA,z\xi_\CA\rag.$$
Hence for $t>0$, we have
$$\frac{\lag ((\Delta_\CB+\epsilon)^t-1) z\xi_\CB,z\xi_\CB\rag}{t}\leq\frac{\lag ((\Delta_\CA+\epsilon)^t-1)  z\xi_\CA,z\xi_\CA\rag}{t}.$$
Note that $\log \epsilon\leq\log(\Delta+\epsilon)\leq \Delta+\epsilon$, hence $z\xi\in D(|\log(\Delta+\epsilon)|^{1/2})$. Let $t\to 0$, we have
$$\lag \log(\Delta_\CB+\epsilon) z\xi_\CB,z\xi_\CB\rag\leq\lag \log(\Delta_\CA+\epsilon) z\xi_\CA,z\xi_\CA\rag,$$
which is equivalent to
$$e_M^\CB \log(\Delta_\CB+\epsilon) e_M^\CB\leq e_M^\CA \log(\Delta_\CA+\epsilon) e_M^\CA,$$
where $e_M^\CA: L^2(\CA)\to L^2(M)$ and $e_M^\CB: L^2(\CB)\to L^2(M)$ are the orthogonal  projections. Let $\epsilon\to 0$, we have
\begin{equation}\label{DB leq DA}
e_M^\CB \log\Delta_\CB e_M^\CB\leq e_M^\CA \log\Delta_\CA e_M^\CA.
\end{equation}
Hence
\begin{equation}\label{hB-hA}
h_\varphi(\CA,\varphi_\CA)=-\varphi( e_M^\CA \log \Delta_\CA e_M^\CA)\leq-\varphi(e_M^\CB \log\Delta_\CB e_M^\CB)=h_\varphi(\CB,\varphi_\CB).
\end{equation}

From now on, assume that $h_\varphi(\CB,\varphi_\CB)<+\wx$. 

Assume that $\CP|_{\CA_\mathrm{RN}}:\CA_\mathrm{RN}\to\CB_\mathrm{RN}$ is a $\ast$-isomorphism. Then by \cite[Theorem 3.6]{Zh23}, we have
$$h_\varphi(\CA,\varphi_\CA)=h_\varphi(\CA_\mathrm{RN},\varphi_{\CA_\mathrm{RN}})=h_\varphi(\CB_\mathrm{RN},\varphi_{\CB_\mathrm{RN}})=h_\varphi(\CB,\varphi_\CB).$$

Conversely, assume that $h_\varphi(\CA,\varphi_\CA)=h_\varphi(\CB,\varphi_\CB)<+\wx$. First, we want to prove that $\CP(\sigma^\CA_t(z))=\sigma^\CB_t(z)$ for any $z\in M$ and $t\in \R$, where $\{\sigma_t^\CA\}_{t\in\R}$ and $\{\sigma_t^\CB\}_{t\in\R}$ are the automorphism groups of $(\CA,\varphi_\CA)$ and $(\CB,\varphi_\CB)$.

Fix $k\geq 1$ and consider $z_k$ in the standard form of $\varphi$. Note that for $(\Delta,\xi)=(\Delta_\CA,\xi_\CA)$ or $(\Delta_\CB,\xi_B)$, by $M\xi\subset D(\Delta^{1/2})$ and $$\lag|\log \Delta| z_k\xi,z_k\xi\rag\leq\lag(2\Delta-\log \Delta) z_k\xi,z_k\xi\rag= 2\Vert z_k\Vert_{\tau,2}+\lag\log \Delta z_k\xi,z_k\xi\rag<+\wx$$ (note that $2x\geq|\log x|+\log x$), we have $z_k\xi\in D(\Delta^{1/2})\cap D(|\log \Delta|^{1/2})$. Assume that a Borel function $f:[0,+\wx)\to\C$ satisfies that there exists a constant $C>0$ such that for any $x\geq0$,
\begin{equation}\label{f leq C}
|f(x)|\leq C(1+x^{1/2}+|\log x|^{1/2}).
\end{equation}
Then we have $z_k\xi\in D(f(\Delta))$. Note that every $f(\Delta)(z_k\xi)$ or $\lag |f(\Delta)|^2 z_k\xi,z_k\xi\rag$ appearing in the following proof satisfies the condition (\ref{f leq C}), hence well defined.

Moreover, assume that Borel functions $g$ and $G:[0,
+\wx)\times[a,b]\to \C$ satisfy that for any $x>0$ and $a\leq t\leq b$, $\frac{\d}{\d t}g(x,t)$ and $\frac{\d}{\d t}G(x,t)$ exist, and there exists a constant $C>0$ such that 
$$|g(x,t)|+|\frac{\d}{\d t}g(x,t)|\leq C(1+x^{1/2}+|\log x|^{1/2}),$$ 
$$|G(x,t)|+|\frac{\d}{\d t}G(x,t)|\leq C(1+x^{1/2}+|\log x|^{1/2})^2.$$
Let
$$\xi_1=(1+\Delta^{1/2}+|\log \Delta |^{1/2})z_k\xi,$$
$$g_1(x,t)=g(x,t)(1+x^{1/2}+|\log x|^{1/2})^{-1},$$
$$G_1(x,t)=G(x,t)(1+x^{1/2}+|\log x|^{1/2})^{-2}.$$
Then $\xi_1$ is well defined and all $g_1$, $\frac{\d}{\d t}g_1$, $G_1$ and $\frac{\d}{\d t}G_1$ are bounded by $C$. By writing 
$$g(\Delta,t) z_k\xi=g_1(\Delta,t) \xi_1,$$
$$\lag G(\Delta,t)z_k\xi,z_k\xi\rag=\lag G_1(\Delta,t)\xi_1,\xi_1\rag,$$
we know that $g(\Delta,t) z_k\xi$ and $\lag G(\Delta,t)z_k\xi,z_k\xi\rag$ are differentiable with respect to $t$ and
\begin{equation}\label{dg}
\frac{\d}{\d t}(g(\Delta,t) z_k\xi)=\frac{\d}{\d t}g(\Delta,t) z_k\xi;
\end{equation}
\begin{equation}\label{dG}
\frac{\d}{\d t}\lag G(\Delta,t)z_k\xi,z_k\xi\rag=\left\lag \frac{\d}{\d t}G(\Delta,t) z_k\xi,z_k\xi\right\rag.
\end{equation}

By (\ref{hB-hA}) and (\ref{DB leq DA}) we must have
\begin{equation}\label{log DB=log DA}
\lag \log\Delta_\CA z_k\xi_\CA,z_k\xi_\CA\rag = \lag\log \Delta_\CB z_k\xi_\CB,z_k\xi_\CB\rag.
\end{equation}

Note that $PS_\CA=S_\CB P$ is equivalent to $S_\CA^*P^*=P^*S_\CB^*$. Then we have
$$\Delta_\CB^{-1}=S_\CB S_\CB^*\geq S_\CB PP^* S_\CB^*=PS_\CA S_\CA^*P^*=P\Delta_\CA^{-1}P^*.$$
Take $(\Delta_1,\Delta_2,T)=(\Delta_\CB^{-1},\Delta_\CA^{-1},P^*)$ in Lemma \ref{(D r+s)t}, then for any $1\leq r,t\leq 1$ and $s\geq 0$, we have 
\begin{equation}\label{D-t+s}
P(\Delta_\CA^{-t}+s)^rP^*\leq (\Delta_\CB^{-t}+s)^r.
\end{equation}
To satisfy the condition (\ref{f leq C}), we rewrite (\ref{D-t+s}) as 
\begin{equation}\label{1+sDt}
\Delta_\CB^{\frac{rt}{2}}P(\Delta_\CA^{-t}+s)^rP^*\Delta_\CB^{\frac{rt}{2}}\leq (1+s\Delta_\CB^t)^r.  
\end{equation}
For $\epsilon>0$, we have that $((\Delta_\CA+\epsilon)^{-t}+s)^r$ is bounded and $$((\Delta_\CA+\epsilon)^{-t}+s)^r\leq(\Delta_\CA^{-t}+s)^r.$$
Therefore, by (\ref{1+sDt}), we have
$$\Delta_\CB^{\frac{rt}{2}}P((\Delta_\CA+\epsilon)^{-t}+s)^rP^*\Delta_\CB^{\frac{rt}{2}}\leq (1+s\Delta_\CB^t)^r.$$
For any $k\geq 1$, since $PP^*(z_k\xi_\CB)=z_k\xi_\CB$, we have
$$\frac{\lag (\Delta_\CB^{\frac{rt}{2}}P((\Delta_\CA+\epsilon)^{-t}+s)^rP^*\Delta_\CB^{\frac{rt}{2}}-PP^*)z_k\xi_\CB,z_k\xi_\CB\rag}{r}
\leq\frac{\lag ((1+s\Delta_\CB^t)^r-1)z_k\xi_\CB,z_k\xi_\CB\rag}{r}.$$
Let $r\to0$, by (\ref{dg}), we have
\begin{align*}
&\lag \log \Delta_\CB^{\frac{t}{2}}PP^*z_k\xi_\CB,z_k\xi_\CB\rag\\
+&\lag P\log((\Delta_\CA+\epsilon)^{-t}+s)P^*z_k\xi_\CB,z_k\xi_\CB\rag\\
+&\lag PP^*\log \Delta_\CB^{\frac{t}{2}}z_k\xi_\CB,z_k\xi_\CB\rag\\
\leq &\lag\log (1+s\Delta_\CB^t)z_k\xi_\CB,z_k\xi_\CB\rag.
\end{align*}
By $PP^*(z_k\xi_\CB)=z_k\xi_\CB$ and $P^*(z_k\xi_\CB)=z_k\xi_\CA$, we have
$$\lag \log((\Delta_\CA+\epsilon)^{-t}+s)z_k\xi_\CA,z_k\xi_\CA\rag\leq \lag\log (\Delta_\CB^{-t}+s)z_k\xi_\CB,z_k\xi_\CB\rag.$$
Let $\epsilon\to 0$, we have
\begin{equation}\label{log D-t+s}
\lag \log(\Delta_\CA^{-t}+s)z_k\xi_\CA,z_k\xi_\CA\rag\leq \lag\log (\Delta_\CB^{-t}+s)z_k\xi_\CB,z_k\xi_\CB\rag.
\end{equation}
By (\ref{log DB=log DA}) and $\log(\Delta^{-t})=-t\log \Delta$, we also have
\begin{equation}\label{log Dt=log Dt}
\lag \log(\Delta_\CA^{-t})z_k\xi_\CA,z_k\xi_\CA\rag = \lag\log (\Delta_\CB^{-t})z_k\xi_\CB,z_k\xi_\CB\rag.
\end{equation}
By (\ref{log D-t+s}) and (\ref{log Dt=log Dt}), we have
$$\frac{\lag (\log(\Delta_\CA^{-t}+s)-\log (\Delta_\CA^{-t}))z_k\xi_\CA,z_k\xi_\CA\rag}{s}\leq \frac{\lag (\log(\Delta_\CB^{-t}+s)-\log (\Delta_\CB^{-t}))z_k\xi_\CB,z_k\xi_\CB\rag}{s}.$$
Let $s\to 0$, then by (\ref{dG}), we have
$$\lag \Delta_\CA^t z_k\xi_\CA,z_k\xi_\CA\rag\leq \lag \Delta_\CB^t z_k\xi_\CB,z_k\xi_\CB\rag.$$
That is,
$$\lag (\Delta_\CA^t-P^*\Delta_\CB^tP) z_k\xi_\CA,z_k\xi_\CA\rag\leq 0.$$
But by (\ref{PDBP leq DA}), $\Delta_\CA^t-P^*\Delta_\CB^tP\geq 0$. Hence we must have 
\begin{equation}\label{Dtz_k=Dtz_k}
\Delta_\CA^t z_k\xi_\CA=P^*\Delta_\CB^t P z_k\xi_\CA=P^*\Delta_\CB^t  z_k\xi_\CB
\end{equation}
for any $0<t<1/2$. 

For any $n\geq 0$ and $0<\epsilon<t<1/2-\epsilon$, note that $|\frac{\d^n x^t}{\d t^n}|=|\log x|^n x^t\leq C_{n,\epsilon}(1+x^{1/2})$. By (\ref{dg}), we can differentiate on $t$ in both sides of (\ref{Dtz_k=Dtz_k}) for $n$ times and get
$$(\log\Delta_\CA)^n\Delta_\CA^t z_k\xi_\CA=P^*(\log\Delta_\CB)^n\Delta_\CB^t z_k\xi_\CB.$$
Since $\Delta^{is}=\sum_{n=0}^{\wx}(is\log \Delta)^n/n!$ for any $s\in\R$, we have
$$\Delta_\CA^{is}\Delta_\CA^t z_k\xi_\CA=P^*\Delta_\CB^{is}\Delta_\CB^t z_k\xi_\CB.$$
Let $t\to 0$, we have
$$\Delta_\CA^{is}  z_k\xi_\CA=P^*\Delta_\CB^{is}  z_k\xi_\CB.$$
Hence
$$\lag (1-PP^*)\Delta_\CB^{is}  z_k\xi_\CB,\Delta_\CB^{is}  z_k\xi_\CB\rag=\Vert \Delta_\CB^{is} z_k\xi_\CB\Vert^2-\Vert\Delta_\CA^{is}  z_k\xi_\CA\Vert^2=\Vert z_k\Vert_{2,\tau}^2-\Vert z_k\Vert_{2,\tau}^2=0.$$
Since $(1-PP^*)\geq 0$, we must have $(1-PP^*)\Delta_\CB^{is}  z_k\xi_\CB=0$.
And
\begin{equation}\label{PDAit=DBit}
P\Delta_\CA^{is}  z_k\xi_\CA=PP^*\Delta_\CB^{is}  z_k\xi_\CB=\Delta_\CB^{is}  z_k\xi_\CB.
\end{equation}
Since $\mathrm{span}\{z_k\hat{1}\}$ is dense in $L^2(M)$, we have $P\Delta_\CA^{is}  z\xi_\CA=\Delta_\CB^{is}  z\xi_\CB$ for any $z\in M$. That is, $\CP(\sigma_t^{\CA}(z))\xi_\CB=\sigma_t^\CB(z)\xi_\CB$.
Hence $$\CP(\sigma_s^{\CA}(z))=\sigma_s^\CB(z)$$
for any $z\in M$ and $s\in \R$. 

Note that
$$\CP(\sigma_s^{\CA}(z)^*\sigma_s^{\CA}(z))=\CP(\sigma_s^{\CA}(z^*z))=\sigma_s^\CB(z^*z)=\sigma_s^\CB(z)^*\sigma_s^\CB(z)=\CP(\sigma_s^{\CA}(z)^*)\CP(\sigma_s^{\CA}(z)).$$
Hence $\sigma_s^{\CA}(z)$ is contained in the multiplicative domain $\mathrm{mult}(\CP)$ of $\CP$. By \cite[Proposition 2.5.11]{AP17}, since $\CP$ preserves the normal faithful states, $\CP$ is normal faithful and $\mathrm{mult}(\CP)$ is weakly closed. Therefore, $$\CA_\mathrm{RN}=\lag \sigma_s^{\CA}(z)\rag_{z\in M,s\in\R}\subset \mathrm{mult}(\CP).$$
And $\CP|_{\CA_\mathrm{RN}}:(\CA_\mathrm{RN},\varphi_{\CA_\RN})\to (\CP(\CA_\mathrm{RN}),\varphi_\CB|_{\CP(\CA_\mathrm{RN})})$ is a state preserving $\ast$-isomorphism. Therefore, 
$$h_\varphi(\CB,\varphi_\CB)=h_\varphi(\CA,\varphi_\CA)=h_\varphi(\CA_\mathrm{RN},\varphi_{\CA_\RN})=h_\varphi(\CP(\CA_\mathrm{RN}),\varphi_\CB|_{\CP(\CA_\mathrm{RN})}).$$
By \cite[Theorem 3.6]{Zh23}, we must have $\CB_\mathrm{RN}=(\CP(\CA_\mathrm{RN}))_\mathrm{RN}=\CP(\CA_\mathrm{RN})$. And $\CP|_{\CA_\mathrm{RN}}:\CA_\mathrm{RN}\to\CB_\mathrm{RN}$ is a $\ast$-isomorphism.
\end{proof}

Let $\Gamma$ be a countable discrete group. Then by \cite[Example 3.5]{Zh23}, for any nonsingular $\Gamma$-space $(X,\nu_X)$, we have that the Radon-Nikodym factor of the crossed product $L(\Gamma\car (X,\nu_X))$ with respect to $L(\Gamma)$ is exactly $L(\Gamma\car(X_\mathrm{RN},\nu_{X_\mathrm{RN}}))$. Then as a direct corollary of this fact and Theorem \ref{Thm NC}, we have the following Corollary \ref{Thm Gamma}.
\begin{corollary}\label{Thm Gamma}
Let $\mu\in\prob(\Gamma)$ be a generating measure. Let $(X,\nu_X)$ and $(Y,\nu_Y)$ be $(\Gamma,\mu)$-spaces. Assume that there exists a measure preserving $\Gamma$-equivariant ucp map $\CP:L^\wx(X,\nu_X)\to L^\wx(Y,\nu_Y)$, i.e., $\nu_X=\nu_Y\circ\CP$ as states on $L^\wx(X)$. Then we have
$$h_\mu(X,\nu_X)\leq h_\mu(Y,\nu_Y).$$
Assume that $h_\mu(Y,\nu_Y)<+\wx$. Then the equality holds if and only if $\CP|_{L^\wx(X_{\mathrm{RN}})}:L^\wx(X_{\mathrm{RN}})\to L^\wx(Y_{\mathrm{RN}})$ is a $\ast$-isomorphism.
\end{corollary}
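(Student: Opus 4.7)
The plan is to reduce Corollary~\ref{Thm Gamma} to Theorem~\ref{Thm NC} by passing to crossed products. Set $M := L(\Gamma)$ with canonical trace $\tau$, and let $\varphi \in \CS_\tau(\BL)$ be the normal regular strongly generating hyperstate associated to $\mu$, namely $\varphi = \sum_{g \in \Gamma} \mu(g) \lag \,\cdot\, \widehat{u_g}, \widehat{u_g} \rag$, so that the standard form has $z_g = \mu(g)^{1/2} u_g$. The crossed products $\CA := L(\Gamma \car (X,\nu_X))$ and $\CB := L(\Gamma \car (Y,\nu_Y))$, endowed with the canonical hyperstates $\varphi_\CA$ and $\varphi_\CB$ obtained by integrating against $\nu_X$ and $\nu_Y$, are $\varphi$-stationary W$^*$-extensions of $(M,\tau)$ whose Furstenberg entropies coincide with the classical Furstenberg entropies $h_\mu(X,\nu_X)$ and $h_\mu(Y,\nu_Y)$; this dictionary between the classical and noncommutative pictures is standard (cf.\ \cite{DP20,Zh23}).

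First I would verify that the $\Gamma$-equivariant, measure-preserving ucp map $\CP : L^\wx(X,\nu_X) \to L^\wx(Y,\nu_Y)$ extends to a state-preserving $M$-bimodular ucp map $\widetilde{\CP} : (\CA, \varphi_\CA) \to (\CB, \varphi_\CB)$ determined by $\widetilde{\CP}(f u_g) = \CP(f) u_g$ for $f \in L^\wx(X)$ and $g \in \Gamma$. This invokes the functoriality of the crossed-product construction with respect to $\Gamma$-equivariant ucp maps: after realizing $\CA$ and $\CB$ concretely on $L^2(X) \otimes \ell^2(\Gamma)$ and $L^2(Y) \otimes \ell^2(\Gamma)$, $\widetilde{\CP}$ arises as the restriction of $\CP \otimes \mathrm{id}_{B(\ell^2(\Gamma))}$, and the $\Gamma$-equivariance of $\CP$ guarantees this restriction lands in $\CB$. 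Complete positivity is then inherited from $\CP \otimes \mathrm{id}$; $M$-bimodularity follows from $\widetilde{\CP}|_M = \mathrm{id}_M$; state preservation reduces to $\CP$ being measure preserving.

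Applying Theorem~\ref{Thm NC} to $\widetilde{\CP}$ then gives $h_\mu(X,\nu_X) \leq h_\mu(Y,\nu_Y)$ at once. When $h_\mu(Y,\nu_Y) < +\wx$, equality of entropies forces $\widetilde{\CP}|_{\CA_\RN} : \CA_\RN \to \CB_\RN$ to be a $\ast$-isomorphism. By \cite[Example 3.5]{Zh23}, $\CA_\RN = L(\Gamma \car (X_\RN, \nu_{X_\RN}))$ and $\CB_\RN = L(\Gamma \car (Y_\RN, \nu_{Y_\RN}))$. Since $\widetilde{\CP}$ is $M$-bimodular (so in particular fixes $L(\Gamma)$) and coincides with $\CP$ on $L^\wx(X) \cap \CA_\RN = L^\wx(X_\RN)$, restricting the $\ast$-isomorphism $\widetilde{\CP}|_{\CA_\RN}$ to the abelian parts yields that $\CP|_{L^\wx(X_\RN)} : L^\wx(X_\RN) \to L^\wx(Y_\RN)$ is a $\ast$-isomorphism. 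The converse direction (a Radon-Nikodym $\ast$-isomorphism implies equality of entropies) is immediate from \cite[Lemma 1.13]{NZ00}.

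The main obstacle is really the first step: carefully constructing the ucp extension $\widetilde{\CP}$ and checking that the Furstenberg entropies of the crossed products with the canonical hyperstates genuinely match the classical Furstenberg entropies of $(X,\nu_X)$ and $(Y,\nu_Y)$. Both are routine within the framework of \cite{DP20, Zh23}, so the overall reduction to Theorem~\ref{Thm NC} is mechanical once these dictionaries are in place.
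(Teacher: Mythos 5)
Your proposal is correct and follows exactly the route the paper intends: the paper presents Corollary~\ref{Thm Gamma} as a ``direct corollary'' of Theorem~\ref{Thm NC} together with the identification $L(\Gamma\car(X,\nu_X))_\RN = L(\Gamma\car(X_\RN,\nu_{X_\RN}))$ from \cite[Example 3.5]{Zh23}, and you have simply filled in the routine intermediate steps (constructing $\widetilde{\CP}$ as a restriction of $\CP\otimes\mathrm{id}$ on the concrete crossed-product realization, matching the classical and noncommutative Furstenberg entropies, and descending the $\ast$-isomorphism $\widetilde{\CP}|_{\CA_\RN}$ to the abelian coefficient algebras via the Fourier decomposition of the crossed product).
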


Following a proof in the same spirit as that of Theorem \ref{Thm NC}, we can generalize Corollary \ref{Thm Gamma} to the setting of locally compact groups.

Let $G$ be a locally compact second countable group and $\mu\in\prob(G)$ be an \textbf{admissible} measure, i.e., there exists $k\geq1$ such that $\mu^{*k}$ is absolutely continuous with respect to the left Haar measure $m_G$ and $\cup_{n\geq 1}(\supp \mu)^n=G$.

\begin{theorem}\label{P for G}
Let $(X,\nu_X)$ and $(Y,\nu_Y)$ be $(G,\mu)$-spaces. Assume that there exists a measure preserving $G$-equivariant ucp map $\CP:L^\wx(X,\nu_X)\to L^\wx(Y,\nu_Y)$. Then we have
$$h_\mu(X,\nu_X)\leq h_\mu(Y,\nu_Y).$$
Assume that $h_\mu(Y,\nu_Y)<+\wx$. Then the equality holds if and only if $\CP|_{L^\wx(X_{\mathrm{RN}})}:L^\wx(X_{\mathrm{RN}})\to L^\wx(Y_{\mathrm{RN}})$ is a $\ast$-isomorphism.
\end{theorem}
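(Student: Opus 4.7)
The plan is to transport the operator-algebraic argument of Theorem \ref{Thm NC} to the classical setting, replacing the modular operator with multiplication by the Radon-Nikodym cocycle. Let $H_X=L^2(G\times X,\mu\otimes\nu_X)$ with distinguished unit vector $\xi_X\equiv 1$, and let $\Delta_X$ be the (possibly unbounded) positive self-adjoint operator of multiplication by $c_X(g,x)=\frac{\d g^{-1}\nu_X}{\d\nu_X}(x)$; define $H_Y$, $\xi_Y$, $\Delta_Y$ analogously. Then $h_\mu(X,\nu_X)=-\lag\log\Delta_X\xi_X,\xi_X\rag$ and similarly for $Y$. By Kadison-Schwarz and measure preservation, $\CP$ extends to an $L^2$-contraction $\CP^{(2)}:L^2(X,\nu_X)\to L^2(Y,\nu_Y)$, and $(Pf)(g,y):=\CP^{(2)}(f(g,\cdot))(y)$ defines a contraction $P:H_X\to H_Y$ with $P\xi_X=\xi_Y$ and $P^*\xi_Y=\xi_X$.

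For the main inequality $P^*\Delta_Y P\leq\Delta_X$, the fiberwise computation combines the pushforward identity $\int_Y h\,c_Y(g,\cdot)\d\nu_Y=\int_Y (h\circ g^{-1})\d\nu_Y$, the $G$-equivariance of $\CP$ (giving $\CP(\phi)\circ g^{-1}=\CP(\phi\circ g^{-1})$), and the $L^2$ Kadison-Schwarz inequality to yield $\int_Y|\CP(\phi)|^2 c_Y(g,\cdot)\d\nu_Y\leq\int_X|\phi|^2 c_X(g,\cdot)\d\nu_X$ for every $\phi\in L^\wx(X)$ and $g\in G$; integrating against $\mu$ and extending by density gives the operator inequality. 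Applying Lemma \ref{(D r+s)t} yields $P^*(\Delta_Y+\epsilon)^t P\leq(\Delta_X+\epsilon)^t$ for $0\leq t\leq 1$, $\epsilon>0$; evaluating on $\xi_X$, dividing by $t$, and letting $t\to 0^+$ then $\epsilon\to 0$ produces $h_\mu(X,\nu_X)\leq h_\mu(Y,\nu_Y)$. The forward direction of the equality case is immediate from \cite[Lemma 1.13]{NZ00}, which asserts that passing to the Radon-Nikodym factor preserves the Furstenberg entropy.

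For the converse direction of the equality case, I mirror Theorem \ref{Thm NC} step by step. The critical ingredient is a dual inequality $P\Delta_X^{-1}P^*\leq\Delta_Y^{-1}$ on $H_Y$; combined with Lemma \ref{(D r+s)t} applied to $\Delta^{-t}$ and the entropy equality $\lag\log\Delta_X\xi_X,\xi_X\rag=\lag\log\Delta_Y\xi_Y,\xi_Y\rag$ (obtained from equality of $h_\mu$), a chain of inequalities forces $\lag\Delta_X^t\xi_X,\xi_X\rag=\lag\Delta_Y^t\xi_Y,\xi_Y\rag$ for $0<t<1/2$, hence the vector identity $\Delta_X^t\xi_X=P^*\Delta_Y^t\xi_Y$. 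Differentiation in $t$ and summation of the power series yield $\Delta_X^{is}\xi_X=P^*\Delta_Y^{is}\xi_Y$ for every $s\in\R$, and a norm comparison $\|\Delta_X^{is}\xi_X\|=\|\Delta_Y^{is}\xi_Y\|=1$ forces $P\Delta_X^{is}\xi_X=\Delta_Y^{is}\xi_Y$. Translated back, this reads $\CP(c_X(g,\cdot)^{is})=c_Y(g,\cdot)^{is}$ in $L^\wx(Y)$ for $\mu$-a.e.\ $g$ and every $s\in\R$; the multiplicative domain argument then shows $\CP$ restricts to a normal $*$-homomorphism on the von Neumann algebra generated by these unitaries, which is precisely $L^\wx(X_\RN)$, and whose image contains the generators of $L^\wx(Y_\RN)$, giving the $*$-isomorphism onto $L^\wx(Y_\RN)$ (injectivity from measure preservation).

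The main obstacle is establishing the dual inequality $P\Delta_X^{-1}P^*\leq\Delta_Y^{-1}$. In the noncommutative proof it followed cleanly from the Tomita identity $PS_\CA=S_\CB P$ together with $\Delta_\CB^{-1}=S_\CB S_\CB^*$, but $L^2(G\times X)$ carries no natural Tomita conjugation. I plan to derive it through a direct fiberwise computation, using the cocycle identity $c_X(g,x)^{-1}=c_X(g^{-1},gx)$ and the interpretation of $(\CP^{(2)})^*$ via the Bayesian reverse Markov kernel of $\CP$ (which is itself a measure-preserving positive unital $G$-equivariant map $L^\wx(Y)\to L^\wx(X)$), adapting the forward-inequality argument with cocycle inversion and the appropriate change of variables.
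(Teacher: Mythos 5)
Your high-level plan is on the right track and closely parallels the paper's, but the specific Hilbert-space choice you make creates the obstacle you flag at the end, and the proposed workaround for the dual inequality is not carried out and would be problematic. The paper resolves precisely this obstacle in a cleaner way that your setup rules out.

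First, you take $H_X=L^2(G\times X,\mu\otimes\nu_X)$ and test against $\xi_X\equiv\mathds{1}$. The paper instead first replaces $\mu$ by an equivalent $\bar\mu$ with $\bar\mu\prec m_G$ and $\supp\bar\mu=G$ (at the cost of rescaling entropy by a constant), and then works on $L^2(G\times X,m_G\otimes\nu_X)$, testing against $f\otimes\mathds{1}_X$ with $f=(\d\mu/\d m_G)^{1/2}$. With Haar measure on the $G$-factor, there \emph{is} a natural Tomita-type anti-linear involution $S_X(F)(g,x)=\Delta_G(g)^{1/2}\,\overline{F(g^{-1},gx)}$ satisfying $\Delta_X=S_X^*S_X$ and $PS_X=S_YP$, and then \emph{both} inequalities $P^*\Delta_YP\leq\Delta_X$ and $P\Delta_X^{-1}P^*\leq\Delta_Y^{-1}$ fall out exactly as in Theorem \ref{Thm NC}. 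With $\mu\otimes\nu_X$ this $S_X$ is not an isometry (the inversion $g\mapsto g^{-1}$ doesn't preserve $\mu$), which is exactly why you find ``no natural Tomita conjugation.''

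Second, your replacement strategy for the dual inequality---treating $(\CP^{(2)})^*$ as a ``Bayesian reverse Markov kernel'' and claiming it is a measure-preserving, positive, unital, \emph{$G$-equivariant} map $L^\wx(Y)\to L^\wx(X)$---will not work as stated. The adjoint of a $G$-equivariant map between spaces with merely \emph{quasi}-invariant measures is not $G$-equivariant; a computation of $\int_Y (g\cdot h)\,\CP(\phi)\,\d\nu_Y$ produces a Radon-Nikodym cocycle factor, so the equivariance is twisted by precisely the cocycles you are trying to control. This is a real gap, not a detail to be filled in. The Tomita-operator route makes this a non-issue.

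Third, in the equality case you get $\CP(c_X(g,\cdot)^{is})=c_Y(g,\cdot)^{is}$ only for $\mu$-a.e.\ $g$, and then conclude that $L^\wx(X_\RN)$ sits inside the multiplicative domain. But $L^\wx(X_\RN)$ is generated by these unitaries over \emph{all} $g\in G$; an additional density/continuity argument (the paper uses $\supp\mu=G$ after the reduction, so the conull set is dense, plus weak-operator convergence of $\Delta_X(g_n,\cdot)^{it}$ as in \cite[Proposition 1.14]{NZ00}) is needed to upgrade from a $\mu$-conull set of $g$'s to all of $G$. You skip this.

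In short: adopting the paper's reduction to $\mu\prec m_G$ with full support and switching the base measure on $G$ to Haar measure restores the Tomita-type involution you are missing, yields both the forward and the reverse quadratic inequalities without any reverse-kernel argument, and also supplies the density needed to extend the cocycle identity from a conull set to all of $G$.
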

\begin{proof}
Since $\mu$ is admissible, there exists $k_0\geq 1$ with $\mu^{*k_0}\prec m_G$. Hence for any $l\geq 0$, $\mu^{*(k_0+l)}\prec \mu^{*l}\ast m_G=m_G$. Let $\bar{\mu}=\sum_{l\geq0}2^{-l-1}\mu^{*(k_0+l)}$. Then $\bar{\mu}\prec m_G$. Since $\cup_{n\geq 1}(\supp \mu)^n=G$, there exists $n_0\geq 1$ with $e\in (\supp \mu)^{n_0}$. Hence $e\in (\supp \mu)^{n_0k_0}$, and for any $n\geq 1$,
$$(\supp \mu)^{n}\subset(\supp \mu)^{n_0k_0+n}\subset \supp \mu^{*(n_0k_0+n)}\subset\supp\bar{\mu}.$$
Therefore, $G=\cup_{n\geq 1}(\supp \mu)^n\subset\supp\bar{\mu}$. Also, for any $(G,\mu)$-space $(Z,\nu_Z)$, by \cite[Proposition 2.24]{Fur02} and \cite[Corollary 1]{KV83}, we have 
$$h_{\bar{\mu}}(Z,\nu_Z)=\sum_{l\geq0}2^{-l-1}(k_0+l)\cdot h_\mu(Z,\nu_Z):= C\cdot h_\mu(Z,\nu_Z).$$ 
Hence after replacing $\mu$ with $\bar{\mu}$, we may assume that $\mu\prec m_G$ and $\supp\mu=G$. 

Let $L^2(G)=L^2(G,m_G)$. Since $\CP:L^\wx(X,\nu_X)\to L^\wx(Y,\nu_Y)$ is measure preserving, it can be extended to $\bar{\CP}:L^2(X,\nu_X)\to L^2(Y,\nu_Y)$ with $\Vert\bar{\CP}\Vert=1$. Let
$$P=1\otimes\bar{\CP}: L^2(G)\otimes L^2(X)=L^2(G\times X)\to L^2(G)\otimes L^2(Y)=L^2(G\times Y).$$
That is, for $F\in L^2(G\times X)$ (note that $F(g,\,\cdot\,)\in L^2(X)$ for $m_G$-a.e. $g\in G$),
$$P(F)(g,y)=\bar{\CP}(F(g,\,\cdot\,))(y)\ ((g, y)\in G\times Y).$$
Then $\Vert P\Vert=1$. 

Let 
$$\Delta_X(g,x)=\frac{\d g^{-1}\nu_X}{\d \nu_X}(x) \mbox{ and } \Delta_Y(g,y)=\frac{\d g^{-1}\nu_Y}{\d \nu_Y}(y)$$ 
be the Radon-Nikodym cocycles of $(X,\nu_X)$ and $(Y,\nu_Y)$. Then we can view $\Delta_X$ and $\Delta_Y$ as unbounded positive operators on $L^2(G\times X)$ and $L^2(G\times Y)$.

Let $\Delta_G:G\to \R_+^*$ be the modular function of $G$ and define the unbounded anti-linear operator $S_X:L^2(G\times X)\to L^2(G\times X)$ by
\begin{equation}\label{def SX}
S_X(F)(g,x)=\Delta_G(g)^{1/2} \cdot\overline{F(g^{-1},gx)}.
\end{equation}
Then we have $S_X^2=1$ and
$$\lag \Delta_X F,F\rag=\int_G\int_X|F(g,g^{-1}x)|^2\d\nu_X(x)\d m_G(g)=\lag S_XF,S_XF\rag.$$
Hence $\Delta_X=S_X^*S_X$. In the case that $G$ is discrete, if $(M,\tau)\subset(\CA,\varphi_\CA)$ is taken as $L(G)\subset L(\Gamma\car(X,\nu_X))$, we will have $S_X=S_\CA:a\xi_\CA\mapsto a^*\xi_\CA$ $(a\in \CA)$ and $\Delta_X=\Delta_\CA$ exactly, which is the reason that we define $S_X$ as in (\ref{def SX}).

Similarly, we can define $S_Y$ and have $\Delta_Y=S_Y^*S_Y$. Note that we also have $PS_X=S_Y P$. Hence
$$\Delta_X=S_X^*S_X\geq S_X^*P^*PS_X=P^*S_Y^*S_Y P=P^*\Delta_Y P.$$
By Lemma \ref{P Dt P Dt}, for $0\leq t\leq 1$, we have
\begin{equation}\label{DY t leq DX t}
 P^*\Delta_Y^t P\leq \Delta_X^t.
\end{equation}
Let $e^X_G: L^2(G\times X)\to L^2(G)$ and $e^Y_G: L^2(G\times Y)\to L^2(G)$ be the orthogonal projections. That is, for $F_1\in L^2(G\times X)$ and $F_2\in L^2(G\times Y)$,
$$e^X_G(F_1)(g)=\int_X F_1(g,x)\d\nu_X(x)\mbox{ and }e^Y_G(F_2)(g)=\int_Y F_2(g,y)\d\nu_Y(y).$$
Then we have $Pe^X_G=e^Y_G$. Hence by (\ref{DY t leq DX t}), we have
$$ e^Y_G\Delta_Y^t e^Y_G\leq e^X_G\Delta_X^t e^X_G.$$
Therefore,
$$ e^Y_G\frac{\Delta_Y^t-1}{t} e^Y_G\leq e^X_G\frac{\Delta_X^t-1}{t} e^X_G.$$
Let $t\to 0$, we have
$$e^Y_G\log\Delta_Y e^Y_G\leq e^X_G \log\Delta_X e^X_G.$$
That is,
$$\int_Y\log\Delta_Y(g,y)\d\nu_Y(y)\leq \int_X\log\Delta_X(g,x)\d\nu_X(x)\mbox{ for } m_G\mbox{-a.e. } g\in G.$$
Therefore,
\begin{align*}
h_\mu(Y,\nu_Y)=&-\int_G\left(\int_Y\log\Delta_Y(g,y)\d\nu_Y(y))\right)\d \mu(g)\\
\geq&-\int_G\left(\int_X\log\Delta_X(g,x)\d\nu_X(x)\right)\d \mu(g)=h_\mu(X,\nu_X).
\end{align*}

From now on, we assume that $h_\mu(Y,\nu_Y)<+\wx$.

Assume that $\CP|_{L^\wx(X_{\mathrm{RN}})}:L^\wx(X_{\mathrm{RN}})\to L^\wx(Y_{\mathrm{RN}})$ is a $\ast$-isomorphism. Then $\CP|_{L^\wx(X_{\mathrm{RN}})}$ induces a measure preserving $G$-equivariant isomorphism $\beta_\mathrm{RN}:(Y_\mathrm{RN},\nu_{Y_\mathrm{RN}})\to(X_\mathrm{RN},\nu_{X_\mathrm{RN}})$. By \cite[Lemma 1.13]{NZ00}, the Radon-Nikodym factor map preserves the Furstenberg entropy. Hence we have 
$$h_\mu(X,\nu_X)=h_\mu(X_\mathrm{RN},\nu_{X_\mathrm{RN}})=h_\mu(Y_\mathrm{RN},\nu_{Y_\mathrm{RN}})=h_\mu(Y,\nu_Y).$$

Conversely, assume that $h_\mu(X,\nu_X)=h_\mu(Y,\nu_Y)<+\wx$. Since $\mu\prec m_G$, we can take $f=(\frac{\d \mu}{\d m_G})^{1/2}\in L^2(G)$. Since $|\log x|\leq 2x-\log x$ for $x>0$, we have
\begin{align*}
&\lag|\log \Delta_X| f\otimes \mathds{1}_X, f\otimes \mathds{1}_X\rag\\
\leq &\lag(2\Delta_X-\log \Delta_X) f\otimes \mathds{1}_X, f\otimes \mathds{1}_X\rag\\
=&2\Vert f\Vert_2^2+h_\mu(X,\nu_X)\\
<&+\wx.
\end{align*}
Hence $f\otimes \mathds{1}_X\in D(|\log \Delta_X|^{1/2})\cap D(\Delta_X^{1/2})$. For the same reason, we have $f\otimes \mathds{1}_Y\in D(|\log \Delta_Y|^{1/2})\cap D(\Delta_Y^{1/2})$.

We also have
$$-\lag\log\Delta_Y f\otimes \mathds{1}_Y, f\otimes \mathds{1}_Y\rag  =h_\mu(Y,\nu_Y)=h_\mu(X,\nu_X)= -\lag\log\Delta_X f\otimes \mathds{1}_X, f\otimes \mathds{1}_X\rag.$$
Therefore, as an analogue of (\ref{log DB=log DA}), we have
\begin{equation}\label{log Y = log X}
\lag\log\Delta_Y f\otimes \mathds{1}_Y, f\otimes \mathds{1}_Y\rag  =\lag\log\Delta_X f\otimes \mathds{1}_X, f\otimes \mathds{1}_X\rag.
\end{equation}
Following the exact same discussion before (\ref{PDAit=DBit}) in the proof of Theorem \ref{Thm NC}, with $(\Delta_\CA,\Delta_\CB,z_k\xi_\CA,z_k\xi_\CB)$ replaced by $(\Delta_X,\Delta_Y,f\otimes \mathds{1}_X,f\otimes \mathds{1}_Y)$, we can prove the following equality as an analogue of (\ref{PDAit=DBit}):
$$P\Delta_X^{it}(f\otimes \mathds{1}_X)=\Delta_Y^{it}(f\otimes \mathds{1}_Y) \mbox{ for any $t\in \R$}.$$
That is, for any $t\in\R$
$$f(g)\cdot \CP(\Delta_X(g,\,\cdot\,)^{it})(y)=f(g)\cdot\Delta_Y(g,y)^{it}\mbox{ for $m_G\times \nu_Y$-a.e. $(g,y)\in G\times Y$.}$$
Hence
$$\begin{aligned}
&\int_G\left(\int_Y|\CP(\Delta_X(g,\,\cdot\,)^{it})(y)-\Delta_Y(g,y)^{it}|^2\d \nu_Y(y)\right)\d \mu(g)\\
=&\int_G\left(\int_Y|f(g)\cdot\CP(\Delta_X(g,\,\cdot\,)^{it})(y)-f(g)\cdot\Delta_Y(g,y)^{it}|^2\d \nu_Y(y)\right)\d m_G(g)\\
=&0.
\end{aligned}$$
Therefore, there exists a $\mu$-conull subset $U_t\subset G$ such that
\begin{equation}\label{P DXit = DYit}
\CP(\Delta_X(g,\,\cdot\,)^{it})=\Delta_Y(g,\,\cdot\,)^{it} \mbox{ in $L^\wx(Y,\nu_Y)$ for any $g\in U_t$.}
\end{equation}
Since $\supp\mu=G$, the $\mu$-conull subset $U_t\subset G$ is also a dense subset of $G$. Then by the proof of \cite[Proposition 1.14]{NZ00}, for any $g_0\in G\setminus U_t$, there exists a sequence $(g_n)\subset U_t$ such that $g_n\to g_0$ and $\Delta_X(g_n,x)^{it}\to\Delta_X(g_0,x)^{it}$ $\nu_X$-a.e. There also exists a subsequence $(g_{n_k})$ of $(g_n)$ such that $\Delta_Y(g_{n_k},y)^{it}\to\Delta_X(g_{0},y)^{it}$ $\nu_Y$-a.e. Moreover, since $\{\Delta_X(g_n,\,\cdot\,)^{it}\}$ are uniformly bounded, we have $\Delta_X(g_{n_k},\,\cdot\,)^{it}\xrightarrow{\mathrm{wo}}\Delta_X(g_0,\,\cdot\,)^{it}$ within $L^\wx(X)$. For the same reason, $\Delta_Y(g_{n_k},\,\cdot\,)^{it}\xrightarrow{\mathrm{wo}}\Delta_Y(g_0,\,\cdot\,)^{it}$. Since $\CP$ is normal for preserving the normal faithful states \cite[Proposition 2.5.11]{AP17}, we know that (\ref{P DXit = DYit}) holds for any $g\in G$.

Since 
$$\begin{aligned}
&\CP(\Delta_X(g,\,\cdot\,)^{-it}\cdot\Delta_X(g,\,\cdot\,)^{it})\\
=&1\\
=&\Delta_Y(g,\,\cdot\,)^{-it}\cdot\Delta_Y(g,\,\cdot\,)^{it}\\
=&\CP(\Delta_X(g,\,\cdot\,)^{-it})\cdot\CP(\Delta_X(g,\,\cdot\,)^{it}),
\end{aligned}$$
we have $L^\wx(X_{\mathrm{RN}})=\lag \Delta_X(g,\,\cdot\,)^{it}\rag_{g\in G,  t\in \R}\subset \mathrm{mult} (\CP)$. Also note that $L^\wx(Y_{\mathrm{RN}})=\lag \Delta_Y(g,\,\cdot\,)^{it}\rag_{g\in G,  t\in \R}$. Therefore, $\CP|_{L^\wx(X_{\mathrm{RN}})}:L^\wx(X_{\mathrm{RN}})\to L^\wx(Y_{\mathrm{RN}})$ is a $\ast$-isomorphism.
\end{proof}

We restate Theorem \ref{P for G} in terminology that aligns more closely with ergodic group theory.
\begin{theorem}\label{beta for G}
Let $(X,\nu_X)$ be a compact metrizable $(G,\mu)$-space and $(Y,\nu_Y)$ be a $(G,\mu)$-space. Assume that there exists a quasi-factor map, i.e., a $G$-equivariant measurable map $\beta:Y\to \prob(X)$ with $\nu_X=\int_Y\beta_y\d\nu_Y(y)$. Then we have
$$h_\mu(X,\nu_X)\leq h_\mu(Y,\nu_Y).$$
Assume that $h_\mu(Y,\nu_Y)<+\wx$. Then the equality holds if and only if $\beta$ induces a measure preserving $G$-equivariant isomorphism $\beta_\mathrm{RN}:(Y_\mathrm{RN},\nu_{Y_\mathrm{RN}})\to(X_\mathrm{RN},\nu_{X_\mathrm{RN}})$ with $\pi_{X_{\mathrm{RN}}*}\circ\beta(y)=\delta_{\beta_\mathrm{RN}\circ\pi_{Y_\mathrm{RN}}(y)}$ for $\nu_Y$-a.e. $y\in Y$, i.e., the following commutative diagram:
$$\begin{tikzcd}
Y \arrow[d, "\pi_{Y_\mathrm{RN}}", two heads] \arrow[r, "\beta"] & \prob(X) \arrow[d, "\pi_{X_\mathrm{RN}*}", two heads] \\
Y_\mathrm{RN} \arrow[r, "\beta_\mathrm{RN}"] \arrow[r, "\sim"']  & X_\mathrm{RN}\subset \prob(X_\mathrm{RN}).            
\end{tikzcd}$$
\end{theorem}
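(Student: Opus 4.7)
The plan is to reduce Theorem \ref{beta for G} to Theorem \ref{P for G} by translating the quasi-factor map $\beta$ into a measure preserving $G$-equivariant ucp map at the level of function algebras. Define $\CP: L^\wx(X,\nu_X) \to L^\wx(Y,\nu_Y)$ by
\[\CP(f)(y) = \int_X f \, \d\beta_y, \qquad f \in L^\wx(X),\; y \in Y.\]
Since each $\beta_y$ is a probability measure, $\CP$ is ucp; the $G$-equivariance of $\beta$ yields the $G$-equivariance of $\CP$; and the disintegration identity $\nu_X = \int_Y \beta_y \, \d\nu_Y(y)$ is exactly the statement $\nu_X = \nu_Y \circ \CP$ on $L^\wx(X)$. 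Theorem \ref{P for G} therefore delivers the inequality $h_\mu(X,\nu_X) \leq h_\mu(Y,\nu_Y)$, together with the criterion that, assuming $h_\mu(Y,\nu_Y) < +\wx$, equality holds if and only if $\CP|_{L^\wx(X_{\mathrm{RN}})}$ is a $\ast$-isomorphism onto $L^\wx(Y_{\mathrm{RN}})$.

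Next I would translate this $\ast$-isomorphism condition into the stated commutative diagram. Via Gelfand duality, a measure preserving $G$-equivariant $\ast$-isomorphism $L^\wx(X_{\mathrm{RN}}) \to L^\wx(Y_{\mathrm{RN}})$ is implemented by a measure preserving $G$-equivariant measurable isomorphism $\beta_{\mathrm{RN}}: Y_{\mathrm{RN}} \to X_{\mathrm{RN}}$. Assuming $\CP|_{L^\wx(X_{\mathrm{RN}})}$ is such an isomorphism, for every $f \in L^\wx(X_{\mathrm{RN}})$ I compute
\[f \circ \beta_{\mathrm{RN}} \circ \pi_{Y_{\mathrm{RN}}}(y) \;=\; \CP(f \circ \pi_{X_{\mathrm{RN}}})(y) \;=\; \int_X f \circ \pi_{X_{\mathrm{RN}}} \, \d\beta_y \;=\; (\pi_{X_{\mathrm{RN}}*}\beta_y)(f),\]
so $\pi_{X_{\mathrm{RN}}*}\beta_y = \delta_{\beta_{\mathrm{RN}}(\pi_{Y_{\mathrm{RN}}}(y))}$ for $\nu_Y$-a.e.\ $y$, which is precisely the commutativity of the diagram. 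Conversely, if $\beta_{\mathrm{RN}}$ exists as described, the same chain of equalities shows $\CP|_{L^\wx(X_{\mathrm{RN}})}$ coincides with pullback by $\beta_{\mathrm{RN}} \circ \pi_{Y_{\mathrm{RN}}}$, which is a $\ast$-homomorphism; since $\beta_{\mathrm{RN}}$ is an isomorphism, this pullback is a $\ast$-isomorphism onto $L^\wx(Y_{\mathrm{RN}})$, and Theorem \ref{P for G} delivers equality of entropies.

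The only new ingredient beyond Theorem \ref{P for G} is the bijective correspondence between measure preserving $G$-equivariant ucp maps $L^\wx(X) \to L^\wx(Y)$ and quasi-factor maps $Y \to \prob(X)$. The compact metrizability of $X$ enters precisely here: it ensures $\prob(X)$ is a Polish space, so a measurable family of states on $C(X)$ indexed by $y \in Y$ can be packaged, via Riesz representation and a measurable selection, into a single Borel map $\beta: Y \to \prob(X)$. I do not anticipate a substantial obstacle, since this duality is standard; the proof is therefore essentially a dictionary between the measure-theoretic and the operator-algebraic pictures built in Theorem \ref{P for G}.
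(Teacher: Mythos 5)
Your proposal is correct and follows essentially the same route as the paper: translate $\beta$ into a measure preserving $G$-equivariant ucp map $\CP$, apply Theorem \ref{P for G}, and unwind the resulting $\ast$-isomorphism criterion via Gelfand duality into the commutative diagram. The only cosmetic difference is that the paper first defines $\CP$ on $C(X)$ (where weak$^*$-measurability of $\beta$ makes well-definedness and measurability of $y\mapsto\beta_y(f)$ immediate) and then extends to $L^\wx(X,\nu_X)$ using state preservation, whereas you define it directly on $L^\wx(X,\nu_X)$; this is equivalent but the paper's two-step definition is slightly cleaner to justify.
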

\begin{proof}
Define a $G$-equivariant ucp map $\CP:C(X)\to L^\wx(Y,\nu_Y)$ by
$$\CP(f)(y)=\beta_y(f)\ (y\in Y, \ f\in C(X)).$$
Since $\nu_X=\int_Y\beta_y\d\nu_Y(y)$, we have $\nu_X=\nu_Y\circ\CP$ as states on $C(X)$. Hence $\CP$ can be naturally extended to a measure preserving ucp map $\CP:L^\wx(X,\nu_X)\to L^\wx(Y,\nu_Y)$. Then by Theorem \ref{P for G}, we have 
$$h_\mu(X,\nu_X)\leq h_\mu(Y,\nu_Y).$$

From now on, assume that $h_\mu(Y,\nu_Y)<+\wx$.

Assume that there exists a measure preserving $G$-equivariant isomorphism $\beta_\mathrm{RN}:(Y_\mathrm{RN},\nu_{Y_\mathrm{RN}})\to(X_\mathrm{RN},\nu_{X_\mathrm{RN}})$. Then by \cite[Lemma 1.13]{NZ00}, we have 
$$h_\mu(X,\nu_X)=h_\mu(X_\mathrm{RN},\nu_{X_\mathrm{RN}})=h_\mu(Y_\mathrm{RN},\nu_{Y_\mathrm{RN}})=h_\mu(Y,\nu_Y).$$

Conversely, assume that $h_\mu(X,\nu_X)=h_\mu(Y,\nu_Y)$. Then by Theorem \ref{P for G}, $\CP|_{L^\wx(X_{\mathrm{RN}})}:L^\wx(X_\mathrm{RN},\nu_{X_\mathrm{RN}})\to L^\wx(Y_\mathrm{RN},\nu_{Y_\mathrm{RN}})$ is a measure preserving $\ast$-isomorphism, which induces a measure preserving $G$-equivariant isomorphism $\beta_\mathrm{RN}:(Y_\mathrm{RN},\nu_{Y_\mathrm{RN}})\to(X_\mathrm{RN},\nu_{X_\mathrm{RN}})$. And the commutative diagram
$$\begin{tikzcd}
L^\wx(X) \arrow[r, "\CP"]                                                                         & L^\wx(Y)                                                      \\
L^\wx(X_\mathrm{RN}) \arrow[r, "\CP"] \arrow[u, "\pi_{X_\mathrm{RN}}^*", hook] \arrow[r, "\sim"'] & L^\wx(Y_\mathrm{RN}) \arrow[u, "\pi_{Y_\mathrm{RN}}^*", hook]
\end{tikzcd}$$
is exactly equivalent to
$$\begin{tikzcd}
Y \arrow[d, "\pi_{Y_\mathrm{RN}}", two heads] \arrow[r, "\beta"] & \prob(X) \arrow[d, "\pi_{X_\mathrm{RN}*}", two heads] \\
Y_\mathrm{RN} \arrow[r, "\beta_\mathrm{RN}"] \arrow[r, "\sim"']  & X_\mathrm{RN}\subset \prob(X_\mathrm{RN}).            
\end{tikzcd}$$
\end{proof}

\section{Rigidity of unique stationary Poisson boundaries}
In this section, we apply Theorem \ref{Thm A} and Theorem \ref{Thm B} to obtain more rigidity results. We still fix a normal regular strongly generating hyperstate $\varphi\in\CS_\tau(\BL)$ with Poisson boundary $(\CB_\varphi,\zeta)$, and an admissible measure $\mu\in\prob(G)$ with Poisson boundary $(B,\nu_B)$. 

\begin{definition}\label{def RN irr}
We say that a W$^*$-extension $(\CA,\varphi_\CA)$ of $(M,\tau)$ is \textbf{RN-irreducible} if $(\CA_\RN,\varphi_{\CA_\RN})=(\CA,\varphi_\CA)$. A nonsingular $G$-space $(X,\nu_X)$ is \textbf{RN-irreducible} if $(X_\RN,\nu_{X_\RN})=(X,\nu_X)$. 
\end{definition}

\begin{example}
By the definition of Radon-Nikodym factors (Subsection \ref{RN factor def}), any (classical or noncommutative) Radon-Nikodym factor itself is RN-irreducible. By the uniqueness of (classical and noncommutative \cite[Theorem 3.2(1)]{Zh23b}) Furstenberg boundary maps, any \textbf{$\varphi$-boundary}, i.e., intermediate subalgebra of $(M,\tau)\subset(\CB_\varphi,\zeta)$, or \textbf{$\mu$-boundary}, i.e., $G$-factor of $(B,\nu_B)$, is also RN-irreducible. 
\end{example}

As a corollary of Theorem \ref{P for G}, we provide a new proof to \cite[Theorem 4.4]{FG10}, which characterizes stationary spaces with maximal entropy, and the original proof used the tool of standard covers.
\begin{corollary}\label{max entropy for G}
Assume that $h_\mu(B,\nu_B)<+\wx$. Let $(X,\nu_X)$ be a $(G,\mu)$-space. Then $h_\mu(X,\nu_X)=h_\mu(B,\nu_B)$ if and only if $(X_\RN,\nu_{X_\RN})\cong(B,\nu_B)$ as $(G,\mu)$-spaces.
\end{corollary}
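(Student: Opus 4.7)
The plan is to reduce the corollary to Theorem~\ref{beta for G} (equivalently Theorem~\ref{P for G}) by exploiting the universality of the Poisson boundary. Specifically, for any $(G,\mu)$-space $(X,\nu_X)$, Furstenberg's classical theory provides a $G$-equivariant measurable quasi-factor map $\beta : B \to \prob(X)$ with $\nu_X = \int_B \beta_b \, \d\nu_B(b)$; equivalently, one obtains a measure preserving $G$-equivariant ucp map $\CP : L^\wx(X,\nu_X) \to L^\wx(B,\nu_B)$ by $\CP(f)(b) = \beta_b(f)$, which is exactly the form of the hypothesis of Theorem~\ref{P for G}.

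With this map in hand, Theorem~\ref{beta for G} applies with $Y = B$: it gives the inequality $h_\mu(X,\nu_X) \leq h_\mu(B,\nu_B)$, and under the assumption $h_\mu(B,\nu_B) < +\wx$, the equality $h_\mu(X,\nu_X) = h_\mu(B,\nu_B)$ holds if and only if $\beta$ induces a measure preserving $G$-equivariant isomorphism $\beta_\RN : (B_\RN, \nu_{B_\RN}) \xrightarrow{\sim} (X_\RN, \nu_{X_\RN})$. The key additional observation is that the Poisson boundary is RN-irreducible: as noted in the example following Definition~\ref{def RN irr}, every $\mu$-boundary is RN-irreducible, and $(B,\nu_B)$ is itself the maximal $\mu$-boundary. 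Hence $(B_\RN, \nu_{B_\RN}) = (B,\nu_B)$, and the condition on $\beta_\RN$ becomes exactly $(X_\RN, \nu_{X_\RN}) \cong (B,\nu_B)$, yielding the ``only if'' direction.

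The converse direction is immediate: if $(X_\RN, \nu_{X_\RN}) \cong (B,\nu_B)$ as $(G,\mu)$-spaces, then the entropy-preservation of the Radon-Nikodym factor (\cite[Lemma 1.13]{NZ00}, recalled in Subsection~\ref{RN factor def}) gives $h_\mu(X,\nu_X) = h_\mu(X_\RN, \nu_{X_\RN}) = h_\mu(B,\nu_B)$. There is no real obstacle beyond invoking the universal quasi-factor map from $(B,\nu_B)$ and the RN-irreducibility of the Poisson boundary; the substantive work has already been carried out in Theorem~\ref{beta for G}.
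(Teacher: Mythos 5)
Your proof is correct and follows the same route as the paper's: invoke the Furstenberg boundary map $\beta : B \to \prob(X)$, apply Theorem~\ref{beta for G} with $Y = B$, and use that $(B,\nu_B)$ is RN-irreducible to conclude $(X_\RN,\nu_{X_\RN}) \cong (B,\nu_B)$. The only small point the paper makes explicit that you elide is that one must first pass to a compact metrizable model of $(X,\nu_X)$ so that $\prob(X)$ and the boundary map are meaningful; this is a routine step but worth stating.
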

\begin{proof}
The ``if'' direction is clear. 

Assume that $h_\mu(X,\nu_X)=h_\mu(B,\nu_B)$. After endowing a compact metrizable model to $(X,\nu_X)$, we may assume that $X$ is a compact metrizable $G$-space. By \cite{Fu63,Fu63a}, there exists the Furstenberg boundary map $\beta:B\to\prob(X)$ with $\nu_X=\int_B\beta_b\nu_B(b)$. Then by Theorem \ref{Thm B}, we have $h_\mu(X,\nu_X)\leq h_\mu(B,\nu_B)$. Since the equality holds, we know that there exists a $G$-isomorphism $(X_\RN,\nu_{X_\RN})\cong(B_\RN,\nu_{B_\RN})=(B,\nu_B)$.
\end{proof}

As a corollary of Theorem \ref{Thm A} and the noncommutative analogue of Corollary \ref{max entropy for G} \cite[Theorem 4.4]{FG10}, we have the following Corollary \ref{max entropy for M}.

\begin{corollary}\label{max entropy for M}
Assume that $h_\varphi(\CB_\varphi,\zeta)<+\wx$. Let $(\CA,\varphi_\CA)$ be a $\varphi$-stationary W$^*$-extension of $(M,\tau)$. Then $h_\varphi(\CA,\varphi_A)=h_\varphi(\CB_\varphi,\zeta)$ if and only if $(\CB_\varphi,\zeta)\subset(\CA,\varphi_\CA)$ as the Radon-Nikodym factor.
\end{corollary}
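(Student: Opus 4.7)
The plan is to mirror the proof of Corollary \ref{max entropy for G} in the noncommutative setting, replacing the classical Furstenberg boundary map by its noncommutative counterpart and then invoking Theorem \ref{Thm A}. The ``if'' direction is immediate from \cite[Theorem 3.6]{Zh23}, which says that the Radon--Nikodym factor preserves the Furstenberg entropy: if $(\CB_\varphi,\zeta)$ sits inside $(\CA,\varphi_\CA)$ as the Radon--Nikodym factor $\CA_\RN$, then
$$h_\varphi(\CA,\varphi_\CA)=h_\varphi(\CA_\RN,\varphi_{\CA_\RN})=h_\varphi(\CB_\varphi,\zeta).$$

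For the ``only if'' direction, suppose $h_\varphi(\CA,\varphi_\CA)=h_\varphi(\CB_\varphi,\zeta)<+\wx$. The first step is to secure a state preserving $M$-bimodular ucp map
$$\CP:(\CA,\varphi_\CA)\to(\CB_\varphi,\zeta),$$
which plays the role of a noncommutative Furstenberg boundary map and expresses the universal/maximal character of $\CB_\varphi$ among $\varphi$-stationary W$^*$-extensions of $(M,\tau)$; this is the ``noncommutative analogue of Corollary \ref{max entropy for G}'' invoked in the paper, available from the noncommutative Poisson boundary theory of Das--Peterson (cf.\ \cite{DP20}, \cite{Zh23b}). With $\CP$ at hand, Theorem \ref{Thm A} applied to $\CP$, using $h_\varphi(\CB_\varphi,\zeta)<+\wx$ and the equality of entropies, forces $\CP|_{\CA_\RN}:\CA_\RN\to(\CB_\varphi)_\RN$ to be a state preserving $\ast$-isomorphism. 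Finally, since $\CB_\varphi$ is itself a $\varphi$-boundary, the example discussion of Section 4 (based on the uniqueness of noncommutative Furstenberg boundary maps, \cite[Theorem 3.2(1)]{Zh23b}) gives $(\CB_\varphi)_\RN=\CB_\varphi$. Consequently, $\CP$ realizes a $\ast$-isomorphism $\CA_\RN\cong\CB_\varphi$, so that $(\CB_\varphi,\zeta)\subset(\CA,\varphi_\CA)$ precisely as the Radon--Nikodym factor.

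The main obstacle is the production of the ucp map $\CA\to\CB_\varphi$. In the classical case, this map comes for free from the existence of the Furstenberg boundary map $\beta:B\to\prob(X)$ together with a compact metrizable model for $(X,\nu_X)$. In the W$^*$-setting, one has to invoke the corresponding universal property of the noncommutative Poisson boundary for $\varphi$-stationary W$^*$-extensions; once this input is granted, the rest of the argument is a formal consequence of Theorem \ref{Thm A} combined with the RN-irreducibility of $\CB_\varphi$.
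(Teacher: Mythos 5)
Your proof is correct and follows the paper's own argument: both rely on the existence of a noncommutative Furstenberg boundary map $\Phi_{\varphi_\CA}\in\UCP_M(\CA,\CB_\varphi)$ with $\varphi_\CA=\zeta\circ\Phi_{\varphi_\CA}$ (the paper cites \cite[Theorem 3.2]{Zh23b} for this), then apply Theorem~\ref{Thm A} together with the RN-irreducibility of $\CB_\varphi$ to conclude $\CA_\RN\cong(\CB_\varphi)_\RN=\CB_\varphi$. The one input you flag as the ``main obstacle'' is precisely what \cite[Theorem 3.2]{Zh23b} supplies, so no gap remains.
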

\begin{proof}
The ``if'' direction is clear. 

Assume that $h_\varphi(\CA,\varphi_A)=h_\varphi(\CB_\varphi,\zeta)$. Since $\varphi_\CA$ is $\varphi$-stationary, by \cite[Theorem 3.2]{Zh23b}, there exists a noncommutative Furstenberg boundary map $\Phi_{\varphi_\CA}\in\UCP_M(\CA,\CB_\varphi)$ with $\varphi_\CA=\zeta\circ\Phi_{\varphi_\CA}$, where $\UCP_M(\CA,\CB_\varphi)$ is the set of $M$-bimodular ucp maps between $\CA$ and $\CB_\varphi$. Hence by Theorem \ref{Thm A}, we have $h_\varphi(\CA,\varphi_A)\leq h_\varphi(\CB_\varphi,\zeta)$. Moreover, since the equality holds, we have $(\CA_\mathrm{RN},\varphi_{\CA_\RN})\cong (\CB_\varphi,\zeta)_\RN=(\CB_\varphi,\zeta)$ as W$^*$-extensions of $(M,\tau)$.
\end{proof}

\begin{definition}\label{unique stationary}
Following \cite[Definition 4.4]{Zh24}, for a $\varphi$-stationary W$^*$-inclusion $(M,\tau)\subset(\CA,\varphi_\CA)$, we say that $(\CA,\varphi_\CA)$ is \textbf{$\varphi$-unique stationary} if there exists a weakly dense C$^*$-subalgebra $\CA_0\subset\CA$ with $M\subset\CA_0$ such that $\varphi_\CA|_{\CA_0}$ is the unique $\varphi$-stationary hyperstate on $\CA_0$. 

A $(G,\mu)$-space $(X,\nu_X)$ is \textbf{$\mu$-unique stationary} if it admits a compact metrizable model $(\bar{X},\nu_{\bar{X}})$ such that $\nu_{\bar{X}}\in \prob(\bar{X})$ is the unique $\mu$-stationary Borel probability measure on $\bar{X}$.
\end{definition}

The following example is \cite[Example 4.6]{Zh24}
\begin{example}\label{example USB}
For a countable discrete group $\Gamma$ and a generating measure $\mu\in\prob(\Gamma)$, let $(M,\tau)=L(\Gamma)$ and $\varphi (T)=\sum_{\gamma\in\Gamma}\mu(\gamma)\langle T\mathds{1}_{\gamma},\mathds{1}_{\gamma}\rangle$. Assume that a $\mu$-boundary $(B_0,\nu_0)$ is $\mu$-unique stationary with unique stationary compact metrizable model $\bar{B}_0$. Then $L(\Gamma\car (B_0,\nu_0))$ is also $\varphi$-unique stationary with unique stationary weakly dense C$^*$-subalgebra $\CA_0=\mathrm{C}^*(L(\Gamma),C(\bar{B}_0))$. Here $C(\bar{B}_0)\subset L^\wx(B_0,\nu_0)$ as a C$^*$-subalgebra.
\end{example}

\begin{definition}\label{weak amenability}
Following \cite[Definition 5.11]{Zh24}, a W$^*$-extension $(\CB,\varphi_\CB)$ of $(M,\tau)$ is \textbf{weakly amenable} if for any C$^*$-inclusion $M\subset \mathcal{C}$, there exists a $M$-bimodular ucp map $\CP:\mathcal{C}\to\CB$.

Following \cite[Definition 2.14]{HK24}, a nonsingular $G$-space $(Y,\nu_Y)$ is \textbf{weakly amenable} if for any $G$-C$^*$-algebra $A$ (or equivalently, any $A=C(X)$ for a compact metrizable $G$-space $X$), there exists a $G$-equivariant ucp map $P:A\to L^\wx(Y,\nu_Y)$.

Following \cite[Definition 2.14]{HK24} and \cite[ Proposition 5.12]{Zh24}, amenability implies weak amenability in both settings.
\end{definition}

As a corollary of Theorem \ref{Thm B}, we can show that there is an entropy separation between unique stationary spaces and (weakly) amenable spaces.
\begin{corollary}\label{us leq am for G}
Let $(X,\nu_X)$ be a $\mu$-unique stationary $(G,\mu)$-space and $(Y,\nu_Y)$ be a weakly amenable $(G,\mu)$-space. Then we have
$$h_\mu(X,\nu_X)\leq h_\mu(Y,\nu_Y).$$
In particular, let
$$h_\mathrm{us}(\mu)=\sup\{h_\mu(X,\nu_X)\mid\mbox{$(X,\nu_X)$ is a $\mu$-unique stationary $(G,\mu)$-space}\},$$
$$h_\mathrm{wa}(\mu)=\inf\{h_\mu(Y,\nu_Y)\mid\mbox{$(Y,\nu_Y)$ is a weakly amenable $(G,\mu)$-space}\},$$
$$h_\mathrm{am}(\mu)=\inf\{h_\mu(Z,\nu_Z)\mid\mbox{$(Z,\nu_Z)$ is an amenable $(G,\mu)$-space}\}.$$
Then we have
$$h_\mathrm{us}(\mu)\leq h_\mathrm{wa}(\mu)\leq h_\mathrm{am}(\mu).$$
Moreover, assume that $h_\mu(B,\nu_B)<+\wx$, then up to isomorphisms, $G$ admits at most one RN-irreducible $(G,\mu)$-space that is $\mu$-unique stationary and weakly amenable.
\end{corollary}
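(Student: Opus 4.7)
The plan is to construct, from the two hypotheses, a measure-preserving $G$-equivariant ucp map $L^\wx(X,\nu_X)\to L^\wx(Y,\nu_Y)$ and then invoke Theorem \ref{P for G}. First, I pick a compact metrizable model $\bar X$ of $(X,\nu_X)$ on which $\nu_{\bar X}$ is the unique $\mu$-stationary Borel probability measure, as supplied by $\mu$-unique stationarity of $X$. Weak amenability of $(Y,\nu_Y)$ applied to the $G$-C$^*$-algebra $C(\bar X)$ then provides a $G$-equivariant ucp map $P_0:C(\bar X)\to L^\wx(Y,\nu_Y)$.

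Next, I observe that the state $\nu_Y\circ P_0$ on $C(\bar X)$ is $\mu$-stationary, which is immediate from the $G$-equivariance of $P_0$ combined with the $\mu$-stationarity of $\nu_Y$. Unique stationarity then forces $\nu_Y\circ P_0=\nu_{\bar X}$, so $P_0$ is state-preserving, and a standard GNS extension promotes it to a normal, state-preserving, $G$-equivariant ucp map $P:L^\wx(X,\nu_X)\to L^\wx(Y,\nu_Y)$. Theorem \ref{P for G} then yields $h_\mu(X,\nu_X)\leq h_\mu(Y,\nu_Y)$. Taking the sup over $X$ and the inf over $Y$ gives $h_\mathrm{us}(\mu)\leq h_\mathrm{wa}(\mu)$, while $h_\mathrm{wa}(\mu)\leq h_\mathrm{am}(\mu)$ follows since every amenable $(G,\mu)$-space is weakly amenable by Definition \ref{weak amenability}.

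For the rigidity assertion, I let $(X_1,\nu_1)$ and $(X_2,\nu_2)$ be two RN-irreducible, $\mu$-unique stationary and weakly amenable $(G,\mu)$-spaces. Because the Poisson boundary $(B,\nu_B)$ is amenable and hence weakly amenable, the first part yields $h_\mu(X_i,\nu_i)\leq h_\mu(B,\nu_B)<+\wx$ for $i=1,2$; applying it symmetrically between $X_1$ and $X_2$ forces $h_\mu(X_1,\nu_1)=h_\mu(X_2,\nu_2)$. I then construct, as above, a measure-preserving $G$-equivariant ucp map $P:L^\wx(X_1,\nu_1)\to L^\wx(X_2,\nu_2)$. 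The equality case of Theorem \ref{P for G} forces $P|_{L^\wx((X_1)_\RN)}$ to be a $\ast$-isomorphism onto $L^\wx((X_2)_\RN)$, and RN-irreducibility identifies this with a $G$-equivariant measure-preserving isomorphism $(X_1,\nu_1)\cong(X_2,\nu_2)$.

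The only mildly delicate step is the GNS extension of $P_0$ from $C(\bar X)$ to a normal ucp map on $L^\wx(X,\nu_X)$, but state-preservation together with Kaplansky density makes this a textbook construction, so I do not expect a genuine obstacle.
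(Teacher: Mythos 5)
Your proof is correct and follows essentially the same route as the paper: pick a unique-stationary compact model, use weak amenability to produce a $G$-equivariant ucp map out of $C(\bar X)$, observe that unique stationarity forces state-preservation, extend to $L^\infty$, and invoke Theorem \ref{P for G} (and its equality case together with RN-irreducibility for the last claim). The only cosmetic difference is that you justify the bound $h_\mu(X_i,\nu_i)\le h_\mu(B,\nu_B)$ via weak amenability of the Poisson boundary rather than citing the standard maximality of Poisson-boundary entropy, but both are valid and the argument is otherwise identical to the paper's.
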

\begin{proof}
After endowing a compact metrizable model to $(X,\nu_X)$, we may just assume that $X$ is a compact metrizable $G$-space with the unique $\mu$-stationary probability measure $\nu_X$. Since $G\car(Y,\nu_Y)$ is weakly amenable, there exists a $G$-equivariant ucp map 
$\CP:C(X)\to L^\wx(Y,\nu_Y)$. Since $(X,\nu_X)$ is $\mu$-unique stationary, we must have $\nu_X=\nu_Y\circ \CP$ as states on $C(X)$. Hence $\CP$ can be extended to a measure preserving ucp map $\CP:L^\wx(X,\nu_X)\to L^\wx(Y,\nu_Y)$. Therefore, by Theorem \ref{P for G}, we have $h_\mu(X,\nu_X)\leq h_\mu(Y,\nu_Y)$.

By the arbitrariness of $(X,\nu_X)$ and $(Y,\nu_Y)$, we have $h_\mathrm{us}(\mu)\leq h_\mathrm{wa}(\mu)$. Since amenability implies weak amenability, we also have $h_\mathrm{wa}(\mu)\leq h_\mathrm{am}(\mu)$,

Assume that $h_\mu(B,\nu_B)<+\wx$. Let $(Z,\nu_{Z})$ and $(W,\nu_W)$ be $\mu$-unique stationary weakly amenable RN-irreducible $(G,\mu)$-spaces. Since $(Z,\nu_{Z})$ is unique stationary and $(W,\nu_W)$ is weakly amenable, by the same discussion above, there exists a measure preserving $G$-equivariant ucp map $\CP_1: L^\wx(Z,\nu_{Z})\to L^\wx(W,\nu_W)$ and 
$$h_\mu(Z,\nu_Z)\leq h_\mu(W,\nu_W)\leq h_\mu(B,\nu_B)<+\wx.$$ 
For the same reason, we also have $h_\mu(W,\nu_W)\leq h_\mu(Z,\nu_Z)$. Hence we must have $h_\mu(Z,\nu_Z)= h_\mu(W,\nu_W)$ and by Theorem \ref{Thm B},
$$(Z,\nu_Z)=(Z_\RN,\nu_{Z_\RN})\cong(W_\RN,\nu_{W_\RN})=(W,\nu_W).$$
\end{proof}

We have the following Corollary \ref{us leq am for M} as the noncommutative analogue of Corollary \ref{us leq am for G}.
\begin{corollary}\label{us leq am for M}
Let $(\CA,\varphi_\CA)$ and $(\CB,\varphi_\CB)$ be $\varphi$-stationary W$^*$-extensions of $(M,\tau)$. Assume that $(\CA,\varphi_\CA)$ is $\varphi$-unique stationary and $\CB$ is amenable. Then 
$$h_\varphi(\CA,\varphi_\CA)\leq h_\varphi(\CB,\varphi_\CB).$$
In particular, let
$$h_\mathrm{us}(\varphi)=\sup\{h_\varphi(\CA,\varphi_\CA)\mid\mbox{$(\CA,\varphi_\CA)$ is a $\varphi$-unique stationary W$^*$-extension}\},$$
$$h_\mathrm{wa}(\varphi)=\inf\{h_\varphi(\CB,\varphi_\CB)\mid\mbox{$(\CB,\varphi_\CB)$ is a weakly amenable $\varphi$-stationary W$^*$-extension}\},$$
$$h_\mathrm{am}(\varphi)=\inf\{h_\varphi(\mathcal{C},\varphi_\mathcal{C})\mid\mbox{$(\mathcal{C},\varphi_\mathcal{C})$ is an amenable $\varphi$-stationary W$^*$-extension}\}.$$
Then we have
$$h_\mathrm{us}(\varphi)\leq h_\mathrm{am}(\varphi)\leq h_\mathrm{am}(\varphi).$$
Moreover, assume that $h_\varphi(\CB_\varphi,\zeta)<+\wx$, then up to $\ast$-isomorphisms, $(M,\tau)$ admits at most one RN-irreducible W$^*$-extension that is $\varphi$-unique stationary and weakly amenable.
\end{corollary}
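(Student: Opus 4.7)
The plan is to mirror the proof of Corollary \ref{us leq am for G}, with the weakly dense C$^*$-subalgebra $\CA_0 \subset \CA$ witnessing $\varphi$-unique stationarity playing the role of $C(\bar X)$, and Theorem \ref{Thm A} replacing Theorem \ref{P for G}. Since amenability implies weak amenability, applying weak amenability of $(\CB, \varphi_\CB)$ to the C$^*$-inclusion $M \subset \CA_0$ produces an $M$-bimodular ucp map $\CP_0: \CA_0 \to \CB$. By the hyperstate/ucp-map correspondence in \cite[Proposition 2.1]{DP20}, one has $\CP_{\varphi_\CB \circ \CP_0} = \CP_{\varphi_\CB} \circ \CP_0$, so the $\varphi$-stationarity of $\varphi_\CB$ (equivalently $\CP_\varphi \circ \CP_{\varphi_\CB} = \CP_{\varphi_\CB}$) gives $\varphi \ast (\varphi_\CB \circ \CP_0) = \varphi_\CB \circ \CP_0$. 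Hence $\varphi_\CB \circ \CP_0$ is a $\varphi$-stationary hyperstate on $\CA_0$, and $\varphi$-unique stationarity of $\varphi_\CA|_{\CA_0}$ forces $\varphi_\CB \circ \CP_0 = \varphi_\CA|_{\CA_0}$.

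Next I would extend $\CP_0$ to a normal state-preserving $M$-bimodular ucp map $\CP: \CA \to \CB$. The Kadison--Schwarz inequality together with state preservation yields a GNS-level contraction $V: L^2(\CA, \varphi_\CA) \to L^2(\CB, \varphi_\CB)$ determined by $V(a \hat 1_\CA) = \CP_0(a) \hat 1_\CB$; combining this with Kaplansky density on $\CA_0 \subset \CA$ and the automatic normality of state-preserving ucp maps between von Neumann algebras with normal faithful states (via monotone convergence on $\varphi_\CB(\CP(a_i)) = \varphi_\CA(a_i)$) yields the desired $\CP$. Applying Theorem \ref{Thm A} then gives $h_\varphi(\CA, \varphi_\CA) \leq h_\varphi(\CB, \varphi_\CB)$.

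The entropy chain $h_\mathrm{us}(\varphi) \leq h_\mathrm{wa}(\varphi) \leq h_\mathrm{am}(\varphi)$ follows by taking sup/inf in the definitions, noting that only weak amenability of $\CB$ was used above and that amenable implies weakly amenable. For the uniqueness clause, two RN-irreducible $\varphi$-unique stationary weakly amenable W$^*$-extensions $(\CA_1, \varphi_1)$ and $(\CA_2, \varphi_2)$ satisfy the main inequality in both directions, yielding $h_\varphi(\CA_1, \varphi_1) = h_\varphi(\CA_2, \varphi_2) \leq h_\varphi(\CB_\varphi, \zeta) < +\wx$ (by fact (1) of Subsection \ref{NC entropy}). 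The equality clause of Theorem \ref{Thm A} then produces a $\ast$-isomorphism between the Radon-Nikodym factors, which RN-irreducibility upgrades to $(\CA_1, \varphi_1) \cong (\CA_2, \varphi_2)$.

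The main obstacle will be the extension step: promoting $\CP_0: \CA_0 \to \CB$ to a normal $M$-bimodular ucp map on all of $\CA$. In the commutative analogue the extension from $C(\bar X)$ to $L^\wx(X, \nu_X)$ is transparent via measure preservation; noncommutatively one must check carefully that $M$-bimodularity survives the extension, either by a direct Kaplansky-density construction on the GNS Hilbert space or by invoking a general extension theorem for state-preserving ucp maps defined on weakly dense C$^*$-subalgebras of a von Neumann algebra carrying a fixed normal faithful state.
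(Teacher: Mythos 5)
Your proposal is correct and follows essentially the same route as the paper: obtain $\CP_0\in\UCP_M(\CA_0,\CB)$ from weak amenability, force $\varphi_\CB\circ\CP_0=\varphi_\CA|_{\CA_0}$ via the unique stationarity of $\CA_0$, extend to a state-preserving $M$-bimodular ucp map on $\CA\cong\overline{(\CA_0,\varphi_\CA|_{\CA_0})}^{\mathrm{wo}}$, and apply Theorem \ref{Thm A}. You supply slightly more detail than the paper at two points (verifying that $\varphi_\CB\circ\CP_0$ is indeed $\varphi$-stationary by writing out $\CP_\varphi\circ\CP_{\varphi_\CB}\circ\CP_0=\CP_{\varphi_\CB}\circ\CP_0$, and the GNS-contraction/Kaplansky-density extension), but the underlying argument is the same.
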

\begin{proof}
Let $\CA_0\subset\CA$ be the weakly dense C$^*$-subalgebra with a unique $\varphi$-stationary hyperstate. Since $(\CB,\varphi_\CB)$ is weakly amenable with respect to $M$, there exists a ucp map $\CP\in\UCP_M(\CA_0,\CB)$. Since $\varphi_\CA|_{\CA_0}$ is the unique $\varphi$-stationary hyperstate on $\CA_0$, we must have $\varphi_\CA|_{\CA_0}=\varphi_\CB\circ\CP$. Hence $\CP$ can be naturally extended to $(\CA,\varphi_\CA)\cong\overline{(\CA_0,\varphi_\CA|_{\CA_0})}^\mathrm{wo}$, i.e., the weak closure of $\CA_0$ under the GNS construction with respect to $\varphi_\CA|_{\CA_0}$. Hence we have a state preserving $M$-bimodular ucp map $\CP:(\CA,\varphi_\CA)\to (\CB,\varphi_\CB)$. And the rest of the proof follows from a similar argument as in the proof of Corollary \ref{us leq am for G}, with Theorem \ref{Thm A} applied.
\end{proof}

The relationship between amenability and the unique stationary property was also studied in \cite{HK24}. As a corollary of Corollary \ref{us leq am for G} and \ref{us leq am for M}, we provide new proofs to \cite[Theorem 4.8 and Corollary 4.17]{HK24} from an entropy perspective. For the definition of \textbf{standard covers}, we refer to \cite{FG10}.

\begin{corollary}\label{HK24 Thm A}
Assume that $h_\mu(B,\nu_B)<+\wx$. Let $(Y,\nu_Y)$ be a $(G,\mu)$-space with standard cover $(\tilde{Y},\nu_{\tilde{Y}})$ realized by the factor map $\pi_Y:(\tilde{Y},\nu_{\tilde{Y}})\to(Y,\nu_Y)$ and the relatively measure-preserving factor map $\pi_{B_0}: (\tilde{Y},\nu_{\tilde{Y}})\to(B_0,\nu_0)$, where $(B_0,\nu_0)$ is a $\mu$-boundary. Let $(Z,\nu_Z)$ be a weakly amenable $(G,\mu)$-space, and $(\tilde{Y},\nu_{\tilde{Y}})\xrightarrow{\pi_Z}(Z,\nu_Z)\xrightarrow{\pi_Y^Z}(Y,\nu_Y)$ are $G$-factor maps with $\pi_Z\circ\pi_Y^Z=\pi_Y$. If, either
\begin{itemize}
    \item [(i)] $(B_0,\nu_0)$ is $\mu$-unique stationary, or
    \item [(ii)] $(B,\nu_B)$ is $\mu$-unique stationary,
\end{itemize}
then $(\tilde{Y},\nu_{\tilde{Y}})\overset{\pi_Z}{\cong}(Z,\nu_Z)$.
\end{corollary}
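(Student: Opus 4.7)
The plan is to squeeze the entropy $h_\mu(Z,\nu_Z)$ between $h_\mu(\tilde{Y},\nu_{\tilde{Y}})$ and $h_\mu(B_0,\nu_0)$, force all three to coincide, and then upgrade the resulting isomorphism of Radon-Nikodym factors furnished by Theorem \ref{Thm B} to the desired isomorphism $\tilde{Y}\overset{\pi_Z}{\cong} Z$ using the structural description of the standard cover.

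First I would reduce case (ii) to case (i): if $(B,\nu_B)$ is $\mu$-unique stationary on a compact metrizable model $\bar{B}$, then any $\mu$-stationary measure $\eta_0$ on a compact metrizable model $\bar{B}_0$ of the factor lifts via a measurable section to a nonempty compact convex $\mu$-convolution invariant set of measures on $\bar{B}$ projecting to $\eta_0$; by Markov--Kakutani this set contains a $\mu$-stationary measure, which uniqueness forces to be $\nu_B$, so pushing forward gives $\eta_0=\nu_0$. Thus $(B_0,\nu_0)$ is also $\mu$-unique stationary, and both cases reduce to (i). Next, since $\pi_{B_0}$ is relatively measure preserving, the Radon-Nikodym cocycle of $\tilde{Y}$ pulls back from $B_0$, so by direct integration $h_\mu(\tilde{Y},\nu_{\tilde{Y}}) = h_\mu(B_0,\nu_0)$ and $\tilde{Y}_\RN$ identifies with $B_0$ as a factor of $\tilde{Y}$ (as $\mu$-boundaries are RN-irreducible). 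Viewing $\pi_Z$ as the quasi-factor map $y\mapsto\delta_{\pi_Z(y)}$, Theorem \ref{Thm B} yields $h_\mu(Z,\nu_Z)\leq h_\mu(\tilde{Y},\nu_{\tilde{Y}})$, while Corollary \ref{us leq am for G} applied to the $\mu$-unique stationary $(B_0,\nu_0)$ and the weakly amenable $(Z,\nu_Z)$ yields $h_\mu(B_0,\nu_0)\leq h_\mu(Z,\nu_Z)$. All three entropies therefore coincide, and are bounded by $h_\mu(B,\nu_B)<+\wx$.

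Applying the equality case of Theorem \ref{Thm B} to the factor map $\pi_Z$ then produces a measure preserving $G$-equivariant isomorphism $\beta_\RN:\tilde{Y}_\RN\to Z_\RN$ satisfying $\pi_{Z_\RN}\circ\pi_Z = \beta_\RN\circ\pi_{\tilde{Y}_\RN}$; pulling back $\sigma$-algebras inside $\Sigma_{\tilde{Y}}$ forces $\pi_Z^{-1}(\Sigma_{Z_\RN}) = \pi_{\tilde{Y}_\RN}^{-1}(\Sigma_{\tilde{Y}_\RN}) = \pi_{B_0}^{-1}(\Sigma_{B_0})$. I would then invoke the description of the standard cover from \cite{FG10}, which realizes $\tilde{Y}$ as the support of the Furstenberg map $\beta_0:B_0\to\prob(Y)$ inside $B_0\times Y$, whence $\Sigma_{\tilde{Y}} = \pi_Y^{-1}(\Sigma_Y)\vee \pi_{B_0}^{-1}(\Sigma_{B_0})$. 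Since the sub-$\sigma$-algebra $\pi_Z^{-1}(\Sigma_Z)\subset\Sigma_{\tilde{Y}}$ contains $\pi_Y^{-1}(\Sigma_Y)$ (via $\pi_Y^Z$) and $\pi_{B_0}^{-1}(\Sigma_{B_0})$ (by the previous identification), it equals $\Sigma_{\tilde{Y}}$, and $\pi_Z$ is a $G$-isomorphism.

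The main obstacle is the final structural step: one must invoke the explicit $B_0\times Y$ description of the standard cover to conclude that $\Sigma_{\tilde{Y}}$ is generated by the two coordinate pullbacks, and verify that the RN sub-$\sigma$-algebra identification produced by Theorem \ref{Thm B} indeed coincides with $\pi_{B_0}^{-1}(\Sigma_{B_0})$ rather than just being abstractly isomorphic to $\Sigma_{B_0}$. The entropy squeezing itself is essentially immediate from the results already at hand.
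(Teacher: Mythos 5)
Your proposal follows essentially the same strategy as the paper: squeeze $h_\mu(Z,\nu_Z)$ between $h_\mu(\tilde{Y},\nu_{\tilde{Y}})=h_\mu(B_0,\nu_0)$ (using that $\pi_{B_0}$ is relatively measure preserving and that $\pi_Z$ is a factor map) and the lower bound from Corollary~\ref{us leq am for G} (unique stationary $\leq$ weakly amenable), force all entropies to coincide, apply the equality case of Theorem~\ref{Thm B} to identify $\pi_{B_0}^{-1}(\Sigma_{B_0})$ with $\pi_Z^{-1}(\pi_{Z_\RN}^{-1}(\Sigma_{Z_\RN}))$ inside $\Sigma_{\tilde{Y}}$, and then use the fact that the standard cover is the $\mu$-joining of $Y$ and $B_0$ so that $\Sigma_{\tilde{Y}}=\pi_Y^{-1}(\Sigma_Y)\vee\pi_{B_0}^{-1}(\Sigma_{B_0})\subset\pi_Z^{-1}(\Sigma_Z)$. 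This matches the paper's proof (the paper phrases the last step as ``$Z$ is a common intermediate factor of $\tilde{Y}\to Y$ and $\tilde{Y}\to B_0$, hence equals $\tilde{Y}$''). Your worry about ``abstract vs.\ genuine'' identification of the RN sub-$\sigma$-algebra is handled by the commutative diagram built into the statement of Theorem~\ref{Thm B}, exactly as you anticipate.

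The one divergence is your attempt to reduce case (ii) to case (i) by showing that a $\mu$-boundary of a uniquely stationary Poisson boundary is itself uniquely stationary. This step is not needed: in case (ii) the paper simply uses the longer chain $h_\mu(B)\geq h_\mu(B_0)=h_\mu(\tilde{Y})\geq h_\mu(Z)\geq h_\mu(B)$ (the last inequality from Corollary~\ref{us leq am for G} applied to $(B,\nu_B)$ directly), which forces all equalities without invoking unique stationarity of $B_0$. Your Markov--Kakutani lifting sketch, while standard in spirit, glosses over the need to choose compact metrizable models so that the factor map $\bar{B}\to\bar{B}_0$ is continuous: without that, the set $\{\eta\in\prob(\bar{B}):\pi_*\eta=\eta_0\}$ is not obviously nonempty, closed, or convolution-invariant, since $\pi$ is only defined $\nu_B$-a.e.\ and arbitrary $\eta$ need not be pushable forward. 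Since the rest of the argument does not depend on that reduction, this does not affect the validity of your proof, but it is an unnecessary detour compared to the paper.
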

\begin{proof}
Since $\pi_{B_0}: (\tilde{Y},\nu_{\tilde{Y}})\to(B_0,\nu_0)$ is relatively measure-preserving, we have $h_\mu(\tilde{Y},\nu_{\tilde{Y}})=h_\mu(B_0,\nu_0)$. Since factor maps reduce Furstenberg entropy, we also have
$$h_\mu(B,\nu_B)\geq h_\mu(B_0,\nu_0)=h_\mu(\tilde{Y},\nu_{\tilde{Y}})\geq h_\mu(Z,\nu_Z).$$
Since $(Z,\nu_Z)$ is weakly amenable, by Corollary \ref{us leq am for G}, condition (i) will imply $h_\mu(Z,\nu_Z)\geq h_\mu(B_0,\nu_0)$, and (ii) will imply $h_\mu(Z,\nu_Z)\geq h_\mu(B,\nu_B)$. Hence we will always have
$$h_\mu(B_0,\nu_0)=h_\mu(\tilde{Y},\nu_{\tilde{Y}})=h_\mu(Z,\nu_Z).$$
By Theorem \ref{Thm B}, there exists a $G$-isomorphism $\beta_1: (B_0,\nu_0)\cong(\tilde{Y}_\RN,\nu_{\tilde{Y}_\RN})$ (note that $(B_0,\nu_0)$ is RN-irreducible as a $\mu$-boundary) with $\beta_1\circ\pi_{B_0}=\pi_{\tilde{Y}_\RN}$. There also exists a $G$-isomorphism $\beta_2:(Z_\RN,\nu_{Z_\RN})\cong (\tilde{Y}_\RN,\nu_{\tilde{Y}_\RN})$ with $\beta_2\circ\pi_{Z_\RN}\circ\pi_Z=\pi_{\tilde{Y}_\RN}$. Hence we have the following commutative diagram:
$$\begin{tikzcd}
                                                      & {(\tilde{Y},\nu_{\tilde{Y}})} \arrow[d, "\pi_{\tilde{Y}_\RN}"] \arrow[r, "\pi_Z"] \arrow[ld, "\pi_{B_0}"'] & {(Z,\nu_Z)} \arrow[d, "\pi_{Z_\RN}"]                          \\
{(B_0,\nu_0)} \arrow[r, "\beta_1"] \arrow[r, "\sim"'] & {(\tilde{Y}_\RN,\nu_{\tilde{Y}_\RN})}                                                                      & {(Z_\RN,\nu_{Z_\RN}).} \arrow[l, "\beta_2"'] \arrow[l, "\sim"]
\end{tikzcd}$$
Define the factor map $\pi_{B_0}^Z=\beta_1^{-1}\circ\beta_2\circ\pi_{Z_\RN}:(Z,\nu_Z)\to(B_0,\nu_0)$. Then $\pi_{B_0}^Z\circ\pi_Z=\pi_{B_0}$. Now we have that $(Z,\nu_Z)$ is an intermediate factor for both $\pi_Y:(\tilde{Y},\nu_{\tilde{Y}})\to(Y,\nu_Y)$ and $\pi_{B_0}:(\tilde{Y},\nu_{\tilde{Y}})\to(B_0,\nu_0)$. However, since $(\tilde{Y},\nu_{\tilde{Y}})$ is exactly the $\mu$-joining of $(Y,\nu_Y)$ and $(B_0,\nu_0)$ (realized by the factor maps $\pi_Y$ and $\pi_{B_0}$), the only possible common intermediate factor is $(\tilde{Y},\nu_{\tilde{Y}})$ itself. Therefore, we must have $(\tilde{Y},\nu_{\tilde{Y}})\overset{\pi_Z}{\cong}(Z,\nu_Z)$.
\end{proof}

The following corollary is \cite[Corollary 4.17]{HK24}
\begin{corollary}\label{HK24 Thm B}
Assume that $G$ is discrete, $(B,\nu_B)$ is $\mu$-unique stationary and $h_\mu(B,\nu_B)<+\wx$. Let $G\car(Z,\nu_Z)$ be a pmp action. Then the von Neumann algebra $G\ltimes L^\wx(B\times Z,\nu_B\times\nu_Z)$ is maximal amenable in $G\ltimes (B(L^2(B,\nu_B))\otimes L^\wx(Z,\nu_Z))$.
\end{corollary}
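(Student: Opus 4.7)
The strategy is to place the inclusion $\CA \subset \mathcal{N}$ inside the $\varphi$-stationary W$^*$-inclusion framework and apply Theorem \ref{Thm NC} together with Corollary \ref{us leq am for M}. Take $M = L(G)$ with canonical trace $\tau$, and set
\[
\varphi(T) = \sum_{g \in G} \mu(g)\langle T\mathds{1}_g, \mathds{1}_g\rangle \in \CS_\tau(\BL),
\]
so that the noncommutative Poisson boundary is $(\CB_\varphi, \zeta) \cong (G\ltimes L^\infty(B), \nu_B \circ E)$. By Example \ref{example USB}, the unique stationary hypothesis on $(B,\nu_B)$ makes $(\CB_\varphi, \zeta)$ $\varphi$-unique stationary, and $h_\varphi(\CB_\varphi, \zeta) = h_\mu(B, \nu_B) < +\infty$. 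Equip $\CA = G\ltimes L^\infty(B\times Z)$ with the $\varphi$-stationary hyperstate induced by $\nu_B \times \nu_Z$; since $G\car(B\times Z)$ is amenable as the diagonal product of the amenable action on $B$ with the pmp action on $Z$, $\CA$ is amenable, and because the Radon-Nikodym cocycle of $\nu_B \times \nu_Z$ depends only on the $B$-variable, $h_\varphi(\CA) = h_\mu(B\times Z, \nu_B\times\nu_Z) = h_\mu(B, \nu_B)$.

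Now suppose $\CA \subset \mathcal{D} \subset \mathcal{N} := G\ltimes(B(L^2(B))\bar\otimes L^\infty(Z))$ with $\mathcal{D}$ amenable. Equip $\mathcal{N}$ with the natural $\varphi$-stationary hyperstate $\varphi_\mathcal{N}$ coming from $\omega_{\mathds{1}_B} \otimes \nu_Z$, where $\omega_{\mathds{1}_B}$ is the vector state on $B(L^2(B))$ that extends $\nu_B$ $\mu$-stationarily on $L^\infty(B)$, and restrict it to $\mathcal{D}$ to obtain $\varphi_\mathcal{D}$. Corollary \ref{us leq am for M} applied to the $\varphi$-unique stationary $(\CB_\varphi, \zeta)$ and the amenable $(\mathcal{D}, \varphi_\mathcal{D})$ yields $h_\varphi(\CB_\varphi) \le h_\varphi(\mathcal{D})$, while any $\varphi$-stationary W$^*$-extension has entropy at most $h(\varphi) = h_\varphi(\CB_\varphi)$. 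Combined with $h_\varphi(\CA) \le h_\varphi(\mathcal{D})$ obtained from Theorem \ref{Thm NC} applied to the inclusion $\CA \hookrightarrow \mathcal{D}$, all three entropies equal $h_\mu(B, \nu_B)$. Theorem \ref{Thm NC} then makes the inclusion restrict to a $\ast$-isomorphism $\CA_\RN \xrightarrow{\sim} \mathcal{D}_\RN$, and by \cite[Example 3.5]{Zh23} together with $(B\times Z)_\RN = B$ (since $Z$ is pmp), we obtain $\CA_\RN = G\ltimes L^\infty(B) = \CB_\varphi$. Hence $\mathcal{D}_\RN = \CB_\varphi$ sits inside $\CA$.

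The remaining step, and the \emph{main obstacle}, is to promote the identity $\mathcal{D}_\RN = \CA_\RN$ to $\mathcal{D} = \CA$. The plan is to construct the state-preserving $M$-bimodular ucp conditional expectation $F \colon \mathcal{N} \to \CA$ from the canonical conditional expectation $B(L^2(B)) \to L^\infty(B)$ onto the diagonal masa (tensored with the identity on $L^\infty(Z)$ and extended $G$-equivariantly), and to apply Theorem \ref{Thm NC} to $F|_\mathcal{D} \colon \mathcal{D} \to \CA$. This yields a $\ast$-isomorphism of Radon-Nikodym factors, and the proof of Theorem \ref{Thm NC} moreover places $\sigma^\mathcal{D}_t(M) \subset \mathrm{mult}(F|_\mathcal{D})$ for all $t \in \R$. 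The key remaining task is to upgrade this to $\mathrm{mult}(F|_\mathcal{D}) = \mathcal{D}$, using the triviality of $\CA' \cap \mathcal{N} = \C$ (because $L^\infty(B\times Z)$ is a masa in $B(L^2(B))\bar\otimes L^\infty(Z)$ and $G\car B\times Z$ is essentially free ergodic) together with the faithfulness of $\varphi_\mathcal{D}$. Once this is achieved, $F|_\mathcal{D}$ is an injective $\ast$-homomorphism into $\CA$ that restricts to the identity on $\CA$, forcing $\mathcal{D} = \CA$.
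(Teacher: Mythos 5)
Your approach diverges from the paper's at the very first step, and that divergence creates a gap you acknowledge but do not close. The paper begins by taking commutants in $B(L^2(G)\otimes L^2(B\times Z))$ (following \cite{HK24}), which converts the maximal amenability statement into the equivalent claim: any amenable $\CA$ with $G\ltimes L^\infty(Z)\subset\CA\subset G\ltimes L^\infty(B\times Z)$ must equal $G\ltimes L^\infty(B\times Z)$. This reduction is decisive, because once the entropy and Radon--Nikodym machinery (via Corollary~\ref{us leq am for M} and \cite[Theorem 3.6, Example 3.5]{Zh23}) forces $G\ltimes L^\infty(B)=\CA_\RN\subset\CA$, one also has $G\ltimes L^\infty(Z)\subset\CA$ by hypothesis, and the conclusion follows immediately from $\langle G\ltimes L^\infty(B),\,G\ltimes L^\infty(Z)\rangle=G\ltimes L^\infty(B\times Z)$. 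You skip the commutant reduction and instead work with an amenable $\mathcal{D}$ sandwiched between $\CA=G\ltimes L^\infty(B\times Z)$ and $\mathcal{N}$; the entropy argument then only tells you $\mathcal{D}_\RN=\CA_\RN=G\ltimes L^\infty(B)$, which is strictly weaker than $\mathcal{D}=\CA$ and gives no handle on the rest of $\mathcal{D}$. You flag this as the ``main obstacle'' and propose to finish via multiplicative domains plus triviality of $\CA'\cap\mathcal{N}$, but this is not a proof.

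Two further problems with your route. First, the vector state $\omega_{\mathds{1}_B}$ on $B(L^2(B))$ is not faithful, so $\varphi_\mathcal{N}$ (and its restriction $\varphi_\mathcal{D}$) need not be a normal \emph{faithful} hyperstate; the W$^*$-inclusion framework of the paper, and thus Theorem~\ref{Thm NC} and Corollary~\ref{us leq am for M}, require faithfulness, so you cannot directly equip $\mathcal{N}$ or $\mathcal{D}$ with it. Second, your claimed triviality $\CA'\cap\mathcal{N}=\C$ fails in general: the action $G\car Z$ is assumed pmp but not ergodic, so $L^\infty(Z)^G$ may be nontrivial, lies in the center of $\CA$, and hence in $\CA'\cap\mathcal{N}$. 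So the proposed upgrade to $\mathrm{mult}(F|_\mathcal{D})=\mathcal{D}$ would not work even granting everything else. The missing idea throughout is precisely the commutant (duality) reduction that the paper opens with.
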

\begin{proof}
Following the same first step in the proof of \cite[Corollary 4.17]{HK24}, by taking commutants in $B(L^2(G)\otimes L^2(B\times Z))$ \cite[Proposition
 V.7.14]{TakI}, this is equivalent to proving that for any amenable von Neumann algebra $\CA$ with 
$$G\ltimes L^\wx(Z)\subset \CA \subset G\ltimes L^\wx(B\times Z),$$
one must have $\CA=G\ltimes L^\wx(B\times Z)$.

Let $(M,\tau)=L(G)$, and $\varphi(T)=\sum_{g\in G}\mu(g)\lag T \mathds{1}_g,\mathds{1}_g\rag$ be the hyperstate associated with $\mu\in\prob(G)$. Then we have $(\CB_\varphi,\zeta)=G\ltimes L^\wx(B,\nu_B)$ \cite[Theorem 4.1]{Izu04} and $h_\varphi(\CB_\varphi,\zeta)=h_\mu(B,\nu_B)<+\wx$ \cite[Example 5.3]{DP20}. By Example \ref{example USB}, $(\CB_\varphi,\zeta)$ is also $\varphi$-unique stationary.

Let $(\CB,\varphi_\CB)=G\ltimes L^\wx(B\times Z,\nu_B\times\nu_Z)$ and $\varphi_\CA=\varphi_\CB|_\CA$. Since $(\CA,\varphi_\CA)$ is amenable and $(\CB_\varphi,\zeta)$ is $\varphi$-unique stationary, by Corollary \ref{us leq am for M}, we have
$$h_\varphi(\CB_\varphi,\zeta)\leq h_\varphi(\CA,\varphi_\CA) \leq h_\varphi (\CB,\varphi_\CB) \leq h_\varphi (\CB_\varphi,\zeta).$$
Therefore, we must have $h_\varphi(\CA,\varphi_\CA)=h_\varphi (\CB,\varphi_\CB)$. By \cite[Theorem 3.6]{Zh23}, we further have $\CA_\RN=\CB_\RN$. Moreover, by \cite[Example 3.5]{Zh23}, since $\CB=G\ltimes L^\wx(B\times Z)$, we have $\CB_\RN=G\ltimes L^\wx((B\times Z)_\RN)$. Since $G\car(Z,\nu_Z)$ is a pmp action, we have 
$$(B\times Z,\nu_B\times\nu_Z)_\RN=(B,\nu_B)_\RN=(B,\nu_B).$$
Therefore,
$$G\ltimes L^\wx(B)=\CB_\RN=\CA_\RN\subset\CA.$$
Note that we also have $G\ltimes L^\wx(Z)\subset \CA$. Hence
$$G\ltimes L^\wx(B\times Z)=\lag G\ltimes L^\wx(B),G\ltimes L^\wx(Z)\rag \subset \CA,$$
which finishes the proof.
\end{proof}

As a corollary of Corollary \ref{max entropy for G} and \ref{us leq am for G}, we give a new proof to \cite[Theorem 9.2]{NS13}.
\begin{corollary}\label{USB 3 eq for G}
Assume that $(B,\nu_B)$ is $\mu$-unique stationary and $h_\mu(B,\nu_B)<+\wx$. Then for a RN-irreducible $(G,\mu)$-space $(X,\nu_X)$, the following conditions are equivalent:
\begin{itemize}
    \item [(i)] $h_\mu(X,\nu_X)=h_\mu(B,\nu_B)$;
    \item [(ii)] $(X,\nu_X)\cong(B,\nu_B)$ as $(G,\mu)$-spaces;
    \item [(iii)] $G\car(X,\nu_X)$ is amenable;
    \item [(iv)] $G\car(X,\nu_X)$ is weakly amenable.
\end{itemize}
In particular, $(B,\nu_B)$ does not admit any non-trivial amenable $G$-factor.
\end{corollary}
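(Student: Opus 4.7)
The plan is to establish the four conditions in a cycle (i)$\Leftrightarrow$(ii)$\Rightarrow$(iii)$\Rightarrow$(iv)$\Rightarrow$(i) using the corollaries already proved, then deduce the ``in particular'' assertion from the equivalences. First I would observe that (i)$\Leftrightarrow$(ii) is essentially immediate from Corollary \ref{max entropy for G}: that result identifies $h_\mu(X,\nu_X)=h_\mu(B,\nu_B)$ with $(X_\RN,\nu_{X_\RN})\cong(B,\nu_B)$, and the RN-irreducibility hypothesis on $(X,\nu_X)$ collapses $(X_\RN,\nu_{X_\RN})$ to $(X,\nu_X)$. The implication (ii)$\Rightarrow$(iii) is the classical fact (due to Zimmer) that the Poisson boundary is an amenable $G$-space, and (iii)$\Rightarrow$(iv) is explicitly stated in Definition \ref{weak amenability}.

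The substantive step is (iv)$\Rightarrow$(i). Assuming weak amenability of $G\car(X,\nu_X)$, I would apply Corollary \ref{us leq am for G} with $(B,\nu_B)$ playing the role of the $\mu$-unique stationary space and $(X,\nu_X)$ playing the role of the weakly amenable space, obtaining $h_\mu(B,\nu_B)\leq h_\mu(X,\nu_X)$. For the reverse inequality I would pass to a compact metrizable model of $X$ and invoke the Furstenberg boundary map $\beta:B\to\prob(X)$ with $\nu_X=\int_B\beta_b\,\d\nu_B(b)$; Theorem \ref{beta for G} then gives $h_\mu(X,\nu_X)\leq h_\mu(B,\nu_B)$. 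Combining the two inequalities yields equality, which is (i). The only care needed here is orienting the two hypotheses of Corollary \ref{us leq am for G} correctly (unique stationarity on $B$, weak amenability on $X$), so that the bound runs in the required direction.

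For the ``in particular'' assertion, suppose $(Y,\nu_Y)$ is an amenable $G$-factor of $(B,\nu_B)$. Then $(Y,\nu_Y)$ is a $\mu$-boundary, hence RN-irreducible by the example after Definition \ref{def RN irr}, so the equivalence (iii)$\Rightarrow$(ii) just proved applies to $(Y,\nu_Y)$ and yields an isomorphism $(Y,\nu_Y)\cong(B,\nu_B)$ of $(G,\mu)$-spaces. Composing this isomorphism with the given factor map $B\to Y$ produces a $G$-equivariant factor map $B\to B$, which by uniqueness of the Furstenberg boundary map must be the identity; hence the original factor map is an isomorphism, so no proper amenable factor exists. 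I do not anticipate any genuine obstacle: the argument is a synthesis of the earlier results, with the only bookkeeping being to invoke RN-irreducibility at the two places where it matters (on $X$ in the first equivalence, and on $\mu$-boundaries in the final assertion).
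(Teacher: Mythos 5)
Your proposal is correct and follows essentially the same route as the paper: (i)$\Leftrightarrow$(ii) via Corollary~\ref{max entropy for G} together with RN-irreducibility, (ii)$\Rightarrow$(iii) by amenability of the Poisson boundary action, (iii)$\Rightarrow$(iv) trivially, and (iv)$\Rightarrow$(i) by applying Corollary~\ref{us leq am for G} for the lower bound and the maximality of boundary entropy for the upper bound. The only differences are cosmetic: you spell out the upper bound $h_\mu(X,\nu_X)\leq h_\mu(B,\nu_B)$ via the Furstenberg boundary map and Theorem~\ref{beta for G} where the paper simply cites it as known, and you make explicit the derivation of the ``in particular'' clause (via $\mu$-boundaries being RN-irreducible and the implication (iii)$\Rightarrow$(ii)), which the paper leaves implicit.
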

\begin{proof}
(i)$\Rightarrow$(ii): Assume that $h_\mu(X,\nu_X)=h_\mu(B,\nu_B)$, then by Corollary \ref{max entropy for G}, we have $(X,\nu_X)=(X_\RN,\nu_{X_\RN})\cong(B,\nu_B)$.

(ii)$\Rightarrow$(iii): This is clear since a Poisson boundary action is always amenable.

(iii)$\Rightarrow$(iv): This is clear since amenability implies weak amenability.

(iv)$\Rightarrow$(i): Since $(X,\nu_X)$ is weakly amenable and $(B,\nu_B)$ is $\mu$-unique stationary, by Corollary \ref{us leq am for G}, we have $h_\mu(X,\nu_X)\geq h_\mu(B,\nu_B)$. But we also have $h_\mu(X,\nu_X)\leq h_\mu(B,\nu_B)$. Hence $h_\mu(X,\nu_X)= h_\mu(B,\nu_B)$.
\end{proof}

The following Corollary \ref{USB 3 eq for M} is the noncommutative analogue of Corollary \ref{us leq am for G} \cite[Theorem 9.2]{NS13}, which also generalizes and gives a new proof to \cite[Theorem A]{Hou24}.
\begin{corollary}\label{USB 3 eq for M}
Assume that $(\CB_\varphi,\zeta)$ is $\varphi$-unique stationary and $h_\varphi(\CB_\varphi,\zeta)<+\wx$. Assume that $(\CA,\varphi_\CA)$ is a RN-irreducible $\varphi$-stationary W$^*$-extension of $(M,\tau)$. Then the following conditions are equivalent: 
\begin{itemize}
    \item [(i)] $h_\varphi(\CA,\varphi_\CA)=h_\varphi(\CB_\varphi,\zeta)$;
    \item [(ii)] $(\CA,\varphi_\CA)\cong(\CB_\varphi,\zeta)$ as W$^*$-extensions;
    \item[(iii)] $(\CA,\varphi_\CA)$ is amenable;
    \item[(iv)] $(\CA,\varphi_\CA)$ is weakly amenable.
\end{itemize}
In particular, such an inclusion $M\subset \CB_\varphi$ does not admit any non-trivial amenable intermediate subalgebra. For example, the inclusion $L(\Gamma)\subset L(\Gamma\car B)$ for a unique stationary Poisson boundary action of discrete group $\Gamma$ \cite[Example 4.6]{Zh24}.
\end{corollary}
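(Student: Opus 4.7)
The plan is to mimic the proof of Corollary \ref{USB 3 eq for G} verbatim, substituting the noncommutative analogues. Concretely, I would prove (i)$\Rightarrow$(ii)$\Rightarrow$(iii)$\Rightarrow$(iv)$\Rightarrow$(i) and then read off the ``in particular'' statement.

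For (i)$\Rightarrow$(ii): assuming $h_\varphi(\CA,\varphi_\CA)=h_\varphi(\CB_\varphi,\zeta)$, apply Corollary \ref{max entropy for M} to conclude that $(\CB_\varphi,\zeta)\subset(\CA,\varphi_\CA)$ realizes the Radon-Nikodym factor, i.e. $(\CA_\RN,\varphi_{\CA_\RN})\cong(\CB_\varphi,\zeta)$. Since $(\CA,\varphi_\CA)$ is assumed RN-irreducible, $(\CA,\varphi_\CA)=(\CA_\RN,\varphi_{\CA_\RN})\cong(\CB_\varphi,\zeta)$ as W$^*$-extensions.

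For (ii)$\Rightarrow$(iii): amenability of $(\CB_\varphi,\zeta)$ is known; it follows because the Poisson transform $\CP:\CB_\varphi\to\mathrm{Har}(\CP_\varphi)\subset\BL$ is an $M$-bimodular ucp map into an amenable containing algebra, together with the standard conditional-expectation characterization from Subsection \ref{NC entropy}; I would cite the appropriate reference (e.g.\ the noncommutative analogue of the fact that Poisson boundary actions are amenable, which in this paper's framework is part of the background on $\CB_\varphi$). For (iii)$\Rightarrow$(iv): this is immediate from Definition \ref{weak amenability}, where amenability is noted to imply weak amenability.

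The main content is (iv)$\Rightarrow$(i). Since $(\CB_\varphi,\zeta)$ is $\varphi$-unique stationary by hypothesis and $(\CA,\varphi_\CA)$ is weakly amenable, Corollary \ref{us leq am for M} applied with $(\CB_\varphi,\zeta)$ in the role of the unique stationary extension and $(\CA,\varphi_\CA)$ in the role of the weakly amenable extension yields $h_\varphi(\CB_\varphi,\zeta)\leq h_\varphi(\CA,\varphi_\CA)$. The reverse inequality $h_\varphi(\CA,\varphi_\CA)\leq h_\varphi(\CB_\varphi,\zeta)=h(\varphi)$ is fact (1) in Subsection \ref{NC entropy}, since $(\CA,\varphi_\CA)$ is $\varphi$-stationary. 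Combining gives equality.

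Finally, for the ``in particular'' clause: any intermediate subalgebra $M\subset\CA\subset\CB_\varphi$ carries the restricted state $\zeta|_\CA$, is automatically $\varphi$-stationary, and is RN-irreducible as a $\varphi$-boundary (as noted in the example after Definition \ref{def RN irr}). If such an $\CA$ is amenable, (iii)$\Rightarrow$(ii) forces $\CA\cong\CB_\varphi$, so the inclusion is trivial. The only step I expect to require care is the justification of (ii)$\Rightarrow$(iii)—i.e.\ citing or invoking the correct statement that the noncommutative Poisson boundary $\CB_\varphi$ is amenable in the sense of Definition \ref{weak amenability} (or the stronger sense used throughout the paper); everything else is a direct assembly of Corollaries \ref{max entropy for M} and \ref{us leq am for M}.
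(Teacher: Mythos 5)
Your proposal matches the paper's proof essentially step for step: the same (i)$\Rightarrow$(ii)$\Rightarrow$(iii)$\Rightarrow$(iv)$\Rightarrow$(i) cycle, using Corollary \ref{max entropy for M} together with RN-irreducibility for (i)$\Rightarrow$(ii), citing the amenability of the noncommutative Poisson boundary (in the paper, \cite[Proposition 2.4]{DP20}) for (ii)$\Rightarrow$(iii), Definition \ref{weak amenability} for (iii)$\Rightarrow$(iv), and Corollary \ref{us leq am for M} together with the maximality bound $h_\varphi(\CA,\varphi_\CA)\leq h(\varphi)$ for (iv)$\Rightarrow$(i). Your handling of the ``in particular'' clause (an intermediate subalgebra is automatically $\varphi$-stationary and RN-irreducible as a $\varphi$-boundary, so amenability forces it to be all of $\CB_\varphi$) is the intended, if unwritten, argument in the paper.
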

\begin{proof}
(i)$\Rightarrow$(ii): Assume that $h_\varphi(\CA,\varphi_\CA)=h_\varphi(\CB_\varphi,\zeta)$, then by Corollary \ref{max entropy for M}, we have $(\CA,\varphi_\CA)\cong(\CB_\varphi,\zeta)_\RN=(\CB_\varphi,\zeta)$.

(ii)$\Rightarrow$(iii): This is clear since a noncommutative Poisson boundary is always amenable \cite[Proposition 2.4]{DP20}.

(iii)$\Rightarrow$(iv): This is clear since amenability implies weak amenability.

(iv)$\Rightarrow$(i): Since $(\CA,\varphi_\CA)$ is weakly amenable and $(\CB_\varphi,\zeta)$ is $\varphi$-unique stationary, by Corollary \ref{us leq am for M}, we have $h_\varphi(\CA,\varphi_\CA)\geq h_\varphi(\CB_\varphi,\zeta)$. But we also have $h_\varphi(\CA,\varphi_\CA)\leq h_\varphi(\CB_\varphi,\zeta)$. Hence $h_\varphi(\CA,\varphi_\CA)= h_\varphi(\CB_\varphi,\zeta)$.
\end{proof}

As a corollary of Corollary \ref{USB 3 eq for M}, we give a new proof to \cite[Theorem A]{Hou24}, which was used to prove the rank corollary of Connes’ rigidity conjecture under specific conditions \cite[Theorem B]{Hou24}. 
\begin{corollary}\label{Thm of Hou24}
Assume that $(\CB_\varphi,\zeta)$ is $\varphi$-unique stationary and $h_\varphi(\CB_\varphi,\zeta)<+\wx$. Let $M\subset \CA$ be a W$^*$-inclusion with $\CA$ amenable. Assume that there exists a normal faithful $M$-bimodular ucp map $\CP:\CA\to\CB_\varphi$, then we have that 
$$\CP|_{\mathrm{mult}(\CP)}:\mathrm{mult}(\CP)\to\CB_\varphi$$ 
is a $\ast$-isomorphism. 

In particular, we can view $\CB_\varphi\cong\mathrm{mult}(\CP)\subset\CA$ as a von Neumann subalgebra, and $\CP:\CA\to\CB_\varphi$ as a normal faithful conditional expectation.
\end{corollary}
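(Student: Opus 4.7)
The plan is to pull $\zeta$ back along $\CP$ so as to equip $\CA$ with a natural $\varphi$-stationary hyperstate, and then combine Theorem \ref{Thm A} with Corollary \ref{us leq am for M} to force the two entropies to coincide. Concretely, I would set $\varphi_\CA:=\zeta\circ\CP$. This is a normal faithful hyperstate on $\CA$ because $\CP$ and $\zeta$ are normal and faithful and $\CP|_M=\id_M$. The associated $M$-bimodular ucp map to $\BL$ factors as $\CP_{\varphi_\CA}=\CP_\zeta\circ\CP$ (by uniqueness in \cite[Proposition 2.1]{DP20}), so the stationarity of $(\CB_\varphi,\zeta)$ yields
\[
\CP_\varphi\circ\CP_{\varphi_\CA}=\CP_\varphi\circ\CP_\zeta\circ\CP=\CP_\zeta\circ\CP=\CP_{\varphi_\CA},
\]
which says that $(\CA,\varphi_\CA)$ is a $\varphi$-stationary W$^*$-extension of $(M,\tau)$ and $\CP\colon(\CA,\varphi_\CA)\to(\CB_\varphi,\zeta)$ is a state preserving $M$-bimodular ucp map.

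With stationarity in hand, Theorem \ref{Thm A} applied to $\CP$ gives $h_\varphi(\CA,\varphi_\CA)\le h_\varphi(\CB_\varphi,\zeta)$, while amenability of $\CA$ together with the $\varphi$-unique stationarity of $(\CB_\varphi,\zeta)$ forces the reverse inequality via Corollary \ref{us leq am for M}. Hence $h_\varphi(\CA,\varphi_\CA)=h_\varphi(\CB_\varphi,\zeta)<+\wx$, and the equality case of Theorem \ref{Thm A} identifies $\CP|_{\CA_\RN}$ as a $\ast$-isomorphism onto $(\CB_\varphi)_\RN=\CB_\varphi$, the Poisson boundary being RN-irreducible as a $\varphi$-boundary.

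Since $\CP$ is $\ast$-multiplicative on $\CA_\RN$, one has $\CA_\RN\subset\mathrm{mult}(\CP)$, and therefore $\CP(\mathrm{mult}(\CP))\supset\CP(\CA_\RN)=\CB_\varphi$; combined with the faithfulness of $\CP$, this upgrades $\CP|_{\mathrm{mult}(\CP)}\colon\mathrm{mult}(\CP)\to\CB_\varphi$ to a $\ast$-isomorphism. For the ``in particular'' clause, define $E:=\bigl(\CP|_{\mathrm{mult}(\CP)}\bigr)^{-1}\circ\CP\colon\CA\to\mathrm{mult}(\CP)$; idempotence is immediate, and the multiplicative-domain identity $\CP(ba)=\CP(b)\CP(a)$ for $b\in\mathrm{mult}(\CP)$, $a\in\CA$, makes $E$ an $M$-bimodular conditional expectation, with normality and faithfulness inherited from $\CP$. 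The only slightly subtle point, and the conceptual heart of the argument, is recognizing that $\zeta\circ\CP$ is \emph{automatically} $\varphi$-stationary rather than requiring a Ces\`aro averaging procedure; once that is observed the rest is a clean assembly of the rigidity results already established in the paper.
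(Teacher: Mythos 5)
Your proof is correct and follows the same overall strategy as the paper: pull back $\zeta$ along $\CP$ to get a $\varphi$-stationary state $\varphi_\CA=\zeta\circ\CP$, squeeze the entropies from both directions, and read off the isomorphism on the Radon--Nikodym factor. There is, however, a genuine simplification in your routing that is worth noting. The paper does not apply Theorem~\ref{Thm A} directly to $\CP$; instead it first passes to $(\CA_\RN,\varphi_{\CA_\RN})$, shows it is amenable (via a state-preserving conditional expectation $\CA\to\CA_\RN$) and RN-irreducible, invokes Corollary~\ref{USB 3 eq for M} to obtain an \emph{abstract} state-preserving $M$-bimodular $\ast$-isomorphism $\CP_0:(\CA_\RN,\varphi_{\CA_\RN})\to(\CB_\varphi,\zeta)$, and then uses the uniqueness of noncommutative Furstenberg boundary maps \cite[Theorem~3.2(1)]{Zh23b} to identify $\CP_0$ with $\CP|_{\CA_\RN}$. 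Your approach applies Theorem~\ref{Thm A} directly to $\CP:(\CA,\varphi_\CA)\to(\CB_\varphi,\zeta)$ and Corollary~\ref{us leq am for M} directly to the pair $(\CB_\varphi,\CA)$, so the equality case of Theorem~\ref{Thm A} hands you $\CP|_{\CA_\RN}$ as a $\ast$-isomorphism with no detour: you need neither the amenability of $\CA_\RN$ nor the uniqueness of boundary maps. What the paper's route buys is that $\mathrm{mult}(\CP)$ is explicitly identified with $\CA_\RN$ (via the fact that the multiplicative domain of a faithful conditional expectation onto a subalgebra is that subalgebra), whereas you establish that $\CP|_{\mathrm{mult}(\CP)}$ is bijective without naming $\mathrm{mult}(\CP)$; the identification $\mathrm{mult}(\CP)=\CA_\RN$ of course follows implicitly from your argument, since $\CP$ is injective on $\mathrm{mult}(\CP)$ and already surjective on the subalgebra $\CA_\RN$. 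Your closing observation, that no Ces\`aro averaging is needed because $\zeta\circ\CP$ is automatically $\varphi$-stationary by $M$-bimodularity of $\CP$ and stationarity of $\zeta$, is correct and is implicitly used in the paper as well.
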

\begin{proof}
Let $\varphi_\CA=\zeta\circ\CP$, then $\varphi_\CA$ is a $\varphi$-stationary normal faithful hyperstate on $\CA$. Let $(\CA_\RN,\varphi_{\CA_\RN})$ be the Radon-Nikodym factor of $(\CA,\varphi_\CA)$. Then by the definition of Radon-Nikodym factors, there exists a state preserving conditional expectation $E:(\CA,\varphi_\CA)\to(\CA_\RN,\varphi_{\CA_\RN})$. Hence $(\CA_\RN,\varphi_{\CA_\RN})$ is amenable and RN-irreducible. Moreover, since $(\CB_\varphi,\zeta)$ is $\varphi$-unique stationary, by Corollary \ref{USB 3 eq for M}, there exists a state preserving $M$-bimodular $\ast$-isomorphism $\CP_0:(\CA_\RN,\varphi_{\CA_\RN})\to(\CB_\varphi,\zeta)$. But by the uniqueness of noncommutative Furstenberg boundary maps \cite[Theorem 3.2(1)]{Zh23b}, we must have that $\CP|_{\CA_\RN}=\CP_0$ is a $\ast$-isomorphism. And $\CP_0^{-1}\circ\CP:\CA\to\CA_\RN$ is a faithful conditional expectation onto subalgebra. Hence we must have $\mathrm{mult}(\CP)=\mathrm{mult}(\CP_0^{-1}\circ\CP)=\CA_\RN$ and $\CP|_{\mathrm{mult}(\CP)}:\mathrm{mult}(\CP)\to\CB_\varphi$ is a $\ast$-isomorphism. 
\end{proof}

\begin{remark}
In the original statement of \cite[Theorem A]{Hou24}, the inclusion $M\subset\CB_\varphi$ was taken as $L(\Gamma)\subset L(\Gamma\car (B,\nu_B))$ for a unique stationary Poisson boundary action of a discrete group $\Gamma$, without the finite entropy assumption. In fact, the condition $h_\varphi(\CB_\varphi,\zeta)<+\infty$ in Corollary \ref{Thm of Hou24} can be omitted. The original proof of \cite[Theorem A]{Hou24} can be perfectly generalized to the setting of $M\subset\CB_\varphi$.
\end{remark}

\section*{Acknowledgement}
The author would like to thank his supervisor, Professor Cyril Houdayer, for numerous insightful discussions and valuable comments on this paper. The author is also grateful to Professor Anna Erschler, Professor Yair Hartman, and Professor Amos Nevo for their valuable comments from the perspective of ergodic group theory.

\end{document}